\theoremstyle{plain} 
\newtheorem{theorem}{Theorem}[section]
\newtheorem{lemma}[theorem]{Lemma}
\newtheorem{corollary}[theorem]{Corollary}
\newtheorem{proposition}[theorem]{Proposition}
\newtheorem{fact}[theorem]{Fact}
\newtheorem{theor}{Theorem}
\theoremstyle{definition}
\newtheorem{definition}[theorem]{Definition}
\newtheorem{remark}[theorem]{Remark}
\newtheorem{example}[theorem]{Example}
\DeclareMathOperator{\tr}{tr}
\begin{document}
\title[Behavior of the Gaussian curvature of timelike minimal surfaces]{Behavior of the Gaussian curvature of timelike minimal surfaces with singularities}
  \author[S. Akamine]{Shintaro Akamine}
   \address{Graduate School of Mathematics Kyushu University\\
744 Motooka, Nishi-ku,\
Fukuoka 819-0395, Japan}
 \email{s-akamine@math.kyushu-u.ac.jp}

\keywords{Lorentz-Minkowski space, timelike minimal surface, Gaussian curvature, wave front, singularity.}
\subjclass[2010]{Primary 53A10; Secondary 57R45, 53B30.}

\begin{abstract}We prove that the sign of the Gaussian curvature of any timelike minimal surface in the $3$-dimensional Lorentz-Minkowski space is determined by the degeneracy and  the orientations of the two null curves that generate the surface. We also investigate the behavior of the Gaussian curvature near singular points of a timelike minimal surface which admits some kind of singular points.
\end{abstract}

\maketitle

\section{Introduction}\label{Sec.1}A timelike surface in the 3-dimensional Lorentz-Minkowski space $\mathbb{L}^3$ is a surface whose first fundamental form is a Lorentzian metric. In contrast with surfaces in the 3-dimensional Euclidean space $\mathbb{E}^3$ and spacelike surfaces in $\mathbb{L}^3$, timelike surfaces do not always have real principal curvatures, that is, their shape operators are not always diagonalizable even over the complex number field $\mathbb{C}$. In general the diagonalizability of the shape operator of a timelike surface is determined by the discriminant of the characteristic equation for the shape operator, which is $H^2-K$ where $H$ is the mean curvature and $K$ is the Gaussian curvature of the considered timelike surface. In this paper we study the case that $H$ vanishes identically. 

A timelike surface whose mean curvature vanishes identically is called a {\it timelike minimal surface}, and McNertney \cite{McNertney} proved that any such surface can be expressed as the sum of two {\it null curves} (see also Fact \ref{Fact:McNertney}), where a null curve is a regular curve whose velocity vector field is lightlike. 
Based on the studies \cite{FSUY, UY} for spacelike case, Takahashi \cite{T} introduced a notion of timelike minimal surfaces with some kind of singular points of rank one, which are called {\it minfaces} (see Definition \ref{def:minface0} and Definition \ref{def:minface}). 
He also gave criteria for cuspidal edges, swallowtails and cuspidal cross caps which appear frequently on minfaces.
 
 The diagonalizability of the shape operator of a timelike minimal surface is determined by the sign of the Gaussian curvature $K$. More precisely the shape operator is diagonalizable over  the real number field $\mathbb{R}$ on points with negative Gaussian curvature and diagonalizable over $\mathbb{C}\setminus \mathbb{R}$ on points with positive Gaussian curvature. Flat points consist of umbilic points and {\it quasi-umbilic points} (see Definition \ref{def:quasi-umbilic}). Therefore the problem of the diagonalizability of the shape operator of a timelike minimal surface is reduced to the problem of the sign of the Gaussian curvature. This would be quite different from minimal surfaces in $\mathbb{E}^3$, which have non-positive Gaussian curvature, and from maximal surfaces in $\mathbb{L}^3$, which have non-negative Gaussian curvature. Hence, to determine the sign of the Gaussian curvature of timelike minimal surfaces is an important problem. In this paper we investigate how to determine the sign of the Gaussian curvature of a timelike minimal surface near regular and singular points.
 
To achieve our goal, we first give a characterization of flat points of a timelike minimal surface by the notion of non-degeneracy of null curves which generate the surface (Proposition \ref{prop:flat points}). Near non-flat points, we also prove that the sign of the Gaussian curvature is determined only by the {\it orientations} of two generating null curves (see Definition \ref{ori}).  
In addition to this result, by using the notion of pseudo-arclength parameters of null curves, we can also give a construction method of conformal curvature line coordinate systems and conformal asymptotic coordinate systems near non-flat points according to the sign of the Gaussian curvature of a timelike minimal surface (Theorem \ref{theorem:p-Weierstrass_real}).

About the behavior of the Gaussian curvature near singular points of surfaces in an arbitrary 3-dimensional Riemannian manifold, some notions of curvatures along singular points of {\it frontals} and {\it wave fronts} (or {\it fronts} for short, and the definitions of frontals and fronts are given in Section \ref{Sec.4}) were introduced in \cite{MSUY, SUY09}, and many relations between the behaviors of these curvatures and the Gaussian curvature along singular points of frontals and fronts were revealed in \cite{FSUY, MSUY, SUY09}. On the other hand, in $\mathbb{L}^3$, Takahashi \cite{T} proved that any minface is a frontal and gave a necessary and sufficient condition for a minface to be a front (see Fact \ref{Lemma T-2}). Based on these backgrounds, we prove the following result:
\begin{theor}\label{Maintheorem}
Let $f: \Sigma \longrightarrow \mathbb{L}^3$ be a minface and $p\in \Sigma$ a singular point of $f$. 
\begin{itemize}
\item[(i)] If $p$ is a cuspidal edge, then there is no umbilic point near $p$.
\item[(ii)] If $f$ is a front at $p$ and $p$ is not a cuspidal edge, then there is no umbilic and quasi-umbilic points near $p$. Moreover the Gaussian curvature $K$ is negative near $p$ and $\displaystyle \lim_{q \to p} K(q)=-\infty$.
\item[(iii)] If $f$ is not a front at $p$ and $p$ is a non-degenerate singular point, then there is no umbilic and quasi-umbilic points near $p$. Moreover the Gaussian curvature $K$ is positive near $p$ and $\displaystyle \lim_{q \to p} K(q)=\infty$.
\end{itemize}
\end{theor}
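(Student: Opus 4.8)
The plan is to reduce everything to the data of the two generating null curves and then read off each conclusion from the three results already established. By Fact~\ref{Fact:McNertney} I write the minface as $f=\alpha+\beta$ in null coordinates $(u,v)$, with $f_u=\alpha'(u)$ and $f_v=\beta'(v)$ lightlike; normalizing both null curves to be future-pointing I set $\alpha'=r(u)(1,\cos a,\sin a)$ and $\beta'=s(v)(1,\cos b,\sin b)$ with $r,s>0$, so that the first fundamental form is $2\langle\alpha',\beta'\rangle\,du\,dv$ with $\langle\alpha',\beta'\rangle=rs(\cos(a-b)-1)$. The singular set is then exactly $\{a(u)=b(v)\}$, where the two null directions coincide. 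First I would record the two facts I need at regular points: from Proposition~\ref{prop:flat points}, refined slightly, a point is flat iff one of $a',b'$ vanishes, and moreover it is \emph{umbilic} iff both vanish there and \emph{quasi-umbilic} (Definition~\ref{def:quasi-umbilic}) iff exactly one vanishes — this refinement follows because the two outer coefficients of the second fundamental form are proportional to $a'$ and to $b'$ respectively while the middle one vanishes by minimality. From the orientation result around Definition~\ref{ori}, made explicit by Theorem~\ref{theorem:p-Weierstrass_real}, I have $\operatorname{sign}K=\operatorname{sign}(a'b')$ at non-flat points, in the quantitative form that $K$ equals a positive function times $a'b'\,(\cos(a-b)-1)^{-2}$, so $|K|\to\infty$ as one approaches the singular set whenever $a'b'\neq0$ there.

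The second step is to translate the three hypotheses of Theorem~\ref{Maintheorem} into algebraic conditions on $(r,s,a',b')$ at the singular point $p=(u_0,v_0)$. Computing the one-dimensional kernel of $df$ gives the null direction $\eta=s\,\partial_u-r\,\partial_v$, while the singular curve has tangent $\propto b'\,\partial_u+a'\,\partial_v$; hence $\eta$ is transverse to the singular curve iff $sa'+rb'\neq0$. Using Fact~\ref{Lemma T-2}, the front condition becomes $sa'-rb'\neq0$ (equivalently, the frontal normal, which is lightlike at $p$, has nonzero derivative along $\eta$ modulo itself). Combining these with Takahashi's criteria \cite{T} yields the dictionary: $p$ is a cuspidal edge iff $f$ is a front and $sa'+rb'\neq0$; $f$ is a front but $p$ is not a cuspidal edge iff $sa'-rb'\neq0$ and $sa'+rb'=0$; and $f$ is not a front iff $sa'-rb'=0$. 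In particular the front condition $sa'-rb'\neq0$ already forces $(a',b')\neq(0,0)$, so every front singular point is non-degenerate.

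With the dictionary in hand the three cases fall out. For (iii), non-front with non-degeneracy gives $sa'=rb'$ with not both zero, hence $a'$ and $b'$ are both nonzero with the \emph{same} sign; thus $a'b'>0$ in a neighborhood, there are no flat points (so no umbilic or quasi-umbilic points), and $\operatorname{sign}K=\operatorname{sign}(a'b')=+$ with $K\to+\infty$. For (ii), $sa'+rb'=0$ together with the front condition forces $a'$ and $b'$ both nonzero with \emph{opposite} signs, so $a'b'<0$ near $p$, again no flat points and $K\to-\infty$. For (i), being a front guarantees only that not both of $a'(u_0),b'(v_0)$ vanish; taking, say, $a'(u_0)\neq0$ we get $a'\neq0$ throughout a neighborhood of $u_0$, so no nearby point can have $a'=b'=0$, i.e.\ no umbilic point — but a zero of $b'$ near $v_0$ is not excluded, which is precisely why quasi-umbilic points (and hence a definite sign of $K$) cannot be ruled out for cuspidal edges.

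The main obstacle is the second step: extracting from Fact~\ref{Lemma T-2} and Takahashi's criteria the precise algebraic translation — in particular the two determinant conditions $sa'\pm rb'$ and the sign correlations $sa'+rb'=0\Rightarrow a'b'<0$ and $sa'-rb'=0\Rightarrow a'b'>0$ that power the whole trichotomy. This requires setting up the pseudo-arclength normalization of the null curves consistently with the null coordinates as in Theorem~\ref{theorem:p-Weierstrass_real}, computing the frontal (lightlike) normal and its derivative along $\eta$ at the degenerate locus, and confirming the $(\cos(a-b)-1)^{-2}$ blow-up rate with the correct global sign so that the limits in (ii) and (iii) are genuinely $\mp\infty$.
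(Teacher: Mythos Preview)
Your proposal is correct and follows essentially the same strategy as the paper: translate the front/cuspidal-edge criteria of Fact~\ref{Lemma T-2} and Fact~\ref{Fact: T-lemma} into conditions on the generating null curves, then read off the sign of $K$ via Theorem~\ref{theorem:p-Weierstrass_real} and the characterization of flat points in Proposition~\ref{prop:flat points}. The only difference is cosmetic: you work in Milnor's Euclidean-arclength parametrization $(r,s,a,b)$ of Remark~\ref{Milnor's study}, where the two key quantities are $sa'\pm rb'$, whereas the paper uses the real Weierstrass data $(g_1,g_2,\hat\omega_1,\hat\omega_2)$ and pseudo-arclength, where the same quantities appear as $\frac{g_1'}{g_1^2\hat\omega_1}\pm\frac{g_2'}{g_2^2\hat\omega_2}$; the sign dichotomy you need ($sa'+rb'=0\Rightarrow a'b'<0$ and $sa'-rb'=0\Rightarrow a'b'>0$, given $r,s>0$) is exactly the paper's observation that the second (resp.\ first) equation in~(\ref{eq: two discriminants}) forces $\varepsilon_\varphi\varepsilon_\psi=-1$ (resp.\ $+1$).
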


On the Gaussian curvature near cuspidal edges, Saji, Umehara and Yamada pointed out in \cite{SUY09} that the shape of singular points is restricted when the Gaussian curvature is bounded. In \cite{SUY09}, they introduced the {\it singular curvature} on cuspidal edges, and proved that if the Gaussian curvature with respect to the induced metric from $\mathbb{E}^3$ is bounded and positive (resp.\ non-negative) near a cuspidal edge, then the singular curvature is negative (resp.\ non-positive). Noting that for a timelike surface the Gaussian curvatures with respect to the induced metrics from $\mathbb{E}^3$ and $\mathbb{L}^3$ have opposite signs, we can prove the following statement for minfaces:
\begin{theor}\label{Maintheorem2}
The Gaussian curvature with respect to the induced metric from $\mathbb{L}^3$ near a cuspidal edge on a minface and the singular curvature have the same sign. 
\end{theor}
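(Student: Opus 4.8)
The plan is to deduce Theorem~\ref{Maintheorem2} from the Euclidean result of Saji--Umehara--Yamada quoted above, transported to the Lorentzian setting through the sign reversal between $K_{\mathbb{E}^3}$ and $K_{\mathbb{L}^3}$. The starting observation is that being a cuspidal edge is a property of the $\mathcal{A}$-equivalence class of the map germ of $f$, and is in particular a front condition, so it is independent of the ambient metric. Hence one may regard the minface simultaneously as a front $f\colon\Sigma\to\mathbb{E}^3$ and as $f\colon\Sigma\to\mathbb{L}^3$ with the same singular set, the same singular curve $\gamma$, and the same cuspidal edge at $p$. In particular the singular curvature $\kappa_s$ of \cite{SUY09}, defined along $\gamma$ through the Euclidean induced metric, is available, and the question reduces to comparing the sign of $\kappa_s$ with that of $K_{\mathbb{L}^3}$.

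Next I would record the two analytic inputs. First, along a cuspidal edge on a minface the Gaussian curvature stays bounded: unlike parts (ii) and (iii) of Theorem~\ref{Maintheorem}, which record blow-up of $K$, part (i) asserts no blow-up, and the boundedness should be extractable from the same local normal form / Weierstrass-type data for minfaces used to prove that theorem. Second, by the cited fact that for a timelike surface $K_{\mathbb{E}^3}$ and $K_{\mathbb{L}^3}$ have opposite signs, and since the two differ by a positive factor, boundedness of $K_{\mathbb{L}^3}$ along $\gamma$ transfers to $K_{\mathbb{E}^3}$ and their signs are everywhere reversed near $\gamma$.

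With these in hand the argument is short. In the region where $K_{\mathbb{L}^3}<0$ we have $K_{\mathbb{E}^3}>0$ and bounded, so \cite{SUY09} gives $\kappa_s<0$, and both quantities are negative. Dually, where $K_{\mathbb{L}^3}>0$ we have $K_{\mathbb{E}^3}<0$ and bounded, whence $\kappa_s>0$, and both are positive; the degenerate locus $K_{\mathbb{L}^3}=0$ (quasi-umbilic points of $\gamma$) should force $\kappa_s=0$. Collecting the cases yields that $K_{\mathbb{L}^3}$ and $\kappa_s$ share the same sign, which is the assertion.

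The main obstacle is that \cite{SUY09} is quoted here only in its one-directional form ($K_{\mathbb{E}^3}$ bounded and nonnegative forces $\kappa_s\le 0$), whereas the clean sign coincidence I want also needs the reverse implication for $K_{\mathbb{E}^3}<0$ together with the sharp behavior on the zero locus. I expect this to follow from the underlying asymptotic formula in \cite{SUY09}, which expresses the boundary limit of the Gaussian curvature along $\gamma$ as the product of the singular curvature with a strictly positive factor, so that $\operatorname{sign}\!\left(\lim_{q\to\gamma}K_{\mathbb{E}^3}(q)\right)=-\operatorname{sign}(\kappa_s)$ holds as an equality of signs, zeros included. Verifying that minfaces genuinely fall under the bounded regime at cuspidal edges, and pinning down this two-sided proportionality with the correct positive factor, are the points I would need to check with care.
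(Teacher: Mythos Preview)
Your proposed strategy has a genuine gap: the boundedness hypothesis you plan to invoke from \cite{SUY09} is \emph{not} satisfied by minfaces at cuspidal edges. From the representation (\ref{eq:p-Weierstrass_real1}) one computes
\[
K=\frac{4g_1'g_2'}{\hat{\omega}_1\hat{\omega}_2(1-g_1g_2)^4},
\]
and a singular point is precisely where $g_1g_2=1$. Hence $K$ diverges along the singular curve unless $g_1'g_2'=0$ there, i.e.\ unless one of the generating null curves is degenerate. But by Fact~\ref{Fact: T-lemma}(i), a cuspidal edge requires $\frac{g_1'}{g_1^2\hat{\omega}_1}\pm\frac{g_2'}{g_2^2\hat{\omega}_2}\neq 0$, which is perfectly compatible with $g_1'\neq 0\neq g_2'$; Remark~\ref{sign of CE} in the paper gives a concrete example with $K(u,v)=\frac{2v}{(u(1+v^2)-1)^4}$ blowing up along the entire cuspidal-edge locus except on the quasi-umbilic curve $v=0$. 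So your reading of Theorem~\ref{Maintheorem}(i) as asserting ``no blow-up'' is mistaken: it only excludes umbilic points and says nothing about boundedness of $K$. Since the Saji--Umehara--Yamada implication you cite is stated under the bounded-$K_{\mathbb{E}^3}$ hypothesis, the transfer argument breaks at its first step, and the converse direction you flag at the end is moot.

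The paper proceeds entirely differently: it computes $\kappa_s$ directly from the real Weierstrass data, obtaining
\[
\kappa_s=\frac{2g_1'g_2'}{\hat{\omega}_1\hat{\omega}_2(g_1+g_2)^2}\cdot\Bigl(\tfrac{g_1'}{g_1^2\hat{\omega}_1}+\tfrac{g_2'}{g_2^2\hat{\omega}_2}\Bigr)^{-1},
\]
with the null direction $\eta$ normalized so that the last factor is positive. Comparing with the formula for $K$ shows immediately that $\operatorname{sgn}\kappa_s=\operatorname{sgn}\bigl(g_1'g_2'/(\hat{\omega}_1\hat{\omega}_2)\bigr)=\operatorname{sgn}K$, with zeros of $\kappa_s$ matching zeros of $g_1'$ or $g_2'$ (the quasi-umbilic locus). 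No boundedness, no appeal to \cite{SUY09}, and in fact a stronger statement (Theorem~\ref{Theorem: Gaussian curvature near cusps}) than Theorem~\ref{Maintheorem2}.
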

In fact we will prove a stronger result (Theorem \ref{Theorem: Gaussian curvature near cusps}) than Theorem \ref{Maintheorem2}.
By Theorems \ref{Maintheorem} and \ref{Maintheorem2}, we obtain criteria for the sign of the Gaussian curvature near any non-degenerate singular point on a minface.

This article is organized as follows: In Section \ref{Sec.2} we describe some notions of timelike surfaces and null curves. We also give the definition of minfaces as a class of timelike minimal surfaces with singular points by using a representation formula derived in \cite{T}. In Section \ref{Sec.3} we investigate the behavior of the Gaussian curvature near regular points. Finally, in Section \ref{Sec.4} we discuss the sign of the Gaussian curvature near singular points on minfaces and prove our main results: Theorem \ref{Maintheorem} and Theorem \ref{Theorem: Gaussian curvature near cusps}. In Appendix \ref{Appendix:A} we review a precise description of geometry of minfaces given in Takahashi's Master thesis \cite{T}.

\textbf{Acknowledgement.} The author is grateful to Professor Atsufumi Honda for his valuable comments and fruitful discussion. He is also grateful to Professor Miyuki Koiso for her encouragement and suggestions. This work was supported by Grant-in-Aid for JSPS Fellows Number 15J06677.

\section{Preliminaries\label{Sec.2}}
\subsection{Timelike surfaces and their shape operators\label{preliminary1}}
 We denote by $\mathbb{L}^3$ the 3-dimensional Lorentz-Minkowski space, that is, the 3-dimensional real vector space $\mathbb{R}^3$ with the Lorentzian metric \begin{center}
$\langle \ ,\ \rangle =-(dx^0)^2+(dx^1)^2+(dx^2)^2$,
\end{center}
where $(x^0, x^1, x^2)$ are the canonical coordinates in $\mathbb{R}^3$. In $\mathbb{L}^3$, a vector $v$ has one of the three {\it causal characters}: it is {\it spacelike} if $\langle v, v \rangle > 0$ or $v=0$, {\it timelike} if  $\langle v, v \rangle < 0$, and {\it lightlike} or {\it null} if $\langle v, v \rangle = 0$ and $v\neq 0$.  We denote the set of null vectors by $\mathbb{Q}^2:= \left\{v=(v^0, v^1, v^2)\in \mathbb{L}^3\mid \langle v,v \rangle=0,\, v^0\neq0\right\}$ and call it the {\it lightcone}. Let $\Sigma :=\Sigma ^2$ be a two-dimensional connected and oriented smooth manifold and $f$ : $\Sigma $ $\longrightarrow$ $\mathbb{L}^3$ be an immersion.   An immersion $f$ is said to be {\it timelike} (resp. {\it spacelike}) if the first fundamental form, that is, the induced metric $\mathrm{I}=f^*\langle \ ,\  \rangle$ is Lorentzian (resp. Riemannian) on $\Sigma$.

 For a timelike immersion $f$ and its spacelike unit normal vector field $\nu$, the shape operator $S$ and the second fundamental form ${\rm II}$ are defined as\begin{align}
 df(S(X))=-\overline{\nabla}_{X} \nu,\quad & {\rm II}(X,Y)=\langle \overline{\nabla}_{df(X)}df(Y)-df(\nabla_{X}Y), \nu \rangle,\nonumber 
\end{align}
where $X$ and $Y$ are smooth vector fields on $\Sigma $, and $\nabla$, $\overline{\nabla}$ are the Levi-Civita connections on $\Sigma$ and $\mathbb{L}^3$, respectively.  The mean curvature $H$ and the Gaussian curvature $K$ are defined as $H=(1/2)\tr{{\rm II}}$ and $K=\det S$. Let $\tilde{K}$ be the sectional curvature of the Lorentzian manifold $(\Sigma, \rm{I})$. Then the Gauss equation 
\begin{equation*}
\tilde{K}=K
\end{equation*}
implies that the Gaussian curvature $K$ is intrinsic.

One of the most important differences between spacelike surfaces and timelike surfaces is the diagonalizability of the shape operator, that is, the shape operator of a timelike surface is not always diagonalizable even over $\mathbb{C}$. For surfaces in $\mathbb{E}^3$ and spacelike surfaces in $\mathbb{L}^3$, the Gaussian curvature $K$ and mean curvature $H$ satisfy $H^2 -K \geq 0$, and the equality holds on umbilic points, where an {\it umbilic point} of a surface is a point on which the second fundamental form ${\rm II}$ is a scalar multiple of the first fundamental form ${\rm I}$. On the other hand, there are three possibilities of the diagonalizability of the shape operator of a timelike surface in $\mathbb{L}^3$ as follows:
 \begin{enumerate}
\item[(i)] The shape operator is diagonalizable over $\mathbb{R}$. In this case $H^2 - K \geq 0$ with the equality holds on umbilic points.
\item[(ii)] The shape operator is diagonalizable over $\mathbb{C}\setminus \mathbb{R}$. In this case $H^2 - K < 0$. 
\item[(iii)] The shape operator is non-diagonalizable over $\mathbb{C}$. In this case $H^2 - K = 0$.
\end{enumerate}
About Case (iii), Clelland \cite{Clelland} introduced the following notion:
\begin{definition}[\cite{Clelland}]\label{def:quasi-umbilic}
A point $p$ on a timelike surface $\Sigma$ is called {\it quasi-umbilic} if the shape operator of $\Sigma$ is non-diagonalizable over $\mathbb{C}$. 
\end{definition}

\subsection{Timelike minimal surfaces and minfaces\label{preliminary2}}
 For a timelike surface $f$ : $\Sigma $ $\longrightarrow$ $\mathbb{L}^3$, near each point, we can take a {\it Lorentz isothermal coordinate system} $(x,y)$, that is, the first fundamental form $\mathrm{I}$ is written as $\mathrm{I}=E(-dx^2+dy^2)$ with a non-zero function $E$, and a {\it null coordinate system} $(u,v)$ that is, $\mathrm{I}$ is written as $\mathrm{I}=2\Lambda dudv$. A curve $\gamma$ in $\mathbb{L}^3$ 
 whose velocity vector field $\gamma'$ is lightlike is called a {\it null curve}, and a null coordinate system is a coordinate system on which the image of coordinate curves are null curves. Moreover, up to constant multiple, there is a one-to-one correspondence between these coordinate systems as follows:
 \begin{equation*}
 x=\frac{u-v}{2},\quad y=\frac{u+v}{2}.
\end{equation*}

On each null coordinate system $(u,v)$, an immersion $f$ and its mean curvature $H$ satisfy $H\nu=\frac{2}{\Lambda}\frac{\partial^2 f}{\partial u \partial v}$. Therefore, we obtain the following well-known representation formula.
\begin{fact}[\cite{McNertney}]\label{Fact:McNertney}
If $\varphi(u)$ and $\psi(v)$ are null curves in $\mathbb{L}^3$ such that $\varphi'(u)$ and $\psi'(v)$ are linearly independent for all $u$ and $v$, then
\begin{equation}\label{null curves decomposition}
f(u,v)=\frac{\varphi(u)+\psi(v)}{2}
 \end{equation}
gives a timelike minimal surface. Conversely, any timelike minimal surface can be written locally as the equation (\ref{null curves decomposition}) with two null curves $\varphi$ and $\psi$.
\end{fact}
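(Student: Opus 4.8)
The plan is to prove the two implications separately, with both resting on the pointwise identity $H\nu=\tfrac{2}{\Lambda}\,\partial^2 f/\partial u\partial v$ recorded just above, which reduces minimality to the vanishing of the mixed second derivative in a null coordinate system.

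For the forward implication, suppose $\varphi(u),\psi(v)$ are null curves with $\varphi'(u),\psi'(v)$ everywhere linearly independent, and set $f=(\varphi+\psi)/2$. First I would compute $f_u=\tfrac12\varphi'(u)$ and $f_v=\tfrac12\psi'(v)$, so that the coefficients of the first fundamental form are $E=\langle f_u,f_u\rangle=\tfrac14\langle\varphi',\varphi'\rangle=0$ and likewise $G=0$, while $F=\langle f_u,f_v\rangle=\tfrac14\langle\varphi',\psi'\rangle$. The crucial point is to show $F\neq0$: two null vectors in $\mathbb{L}^3$ are mutually orthogonal if and only if they are parallel, so the linear independence of $\varphi'$ and $\psi'$ forces $\langle\varphi',\psi'\rangle\neq0$. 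Hence $\mathrm{I}=2F\,du\,dv$ is nondegenerate, and rewriting it in the coordinates $x=(u-v)/2$, $y=(u+v)/2$ gives $\mathrm{I}=2F(-dx^2+dy^2)$, confirming that $f$ is a timelike immersion for which $(u,v)$ is a genuine null coordinate system with $\Lambda=F$. Since $\varphi$ depends only on $u$ and $\psi$ only on $v$, we get $\partial^2 f/\partial u\partial v=0$, and the identity then yields $H\nu=0$, i.e.\ $H\equiv0$.

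For the converse, I would start from a timelike minimal surface and invoke the local existence of a null coordinate system $(u,v)$ noted in the preliminaries. In these coordinates $H=0$ together with the same identity gives $\partial^2 f/\partial u\partial v=0$. Integrating this d'Alembert-type equation over a coordinate rectangle produces vector-valued functions $P(u)$ and $Q(v)$ with $f=P(u)+Q(v)$; putting $\varphi:=2P$ and $\psi:=2Q$ recovers the decomposition (\ref{null curves decomposition}). It remains to check that $\varphi,\psi$ satisfy the hypothesis: because $E=G=0$ in a null coordinate system, $\langle\varphi',\varphi'\rangle=\langle\psi',\psi'\rangle=0$ so both are null curves, and because $f$ is an immersion the vectors $f_u=\tfrac12\varphi'$ and $f_v=\tfrac12\psi'$ are nonzero and linearly independent, whence $\varphi',\psi'$ are linearly independent (in particular nonvanishing) null vectors.

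The only genuinely delicate ingredient, and the step I would flag as the main obstacle, is the Lorentzian linear-algebra fact that orthogonality of two null vectors is equivalent to their being parallel; this is precisely what converts the \emph{linear independence} appearing in the hypothesis into the \emph{nondegeneracy} and correct signature of the induced metric, and it has no analogue in the Euclidean setting. Everything else, namely the derivative computations and the integration of $\partial^2 f/\partial u\partial v=0$, is routine, the latter requiring only that the chart be chosen small enough (e.g.\ a rectangle) for the separation of variables to be valid.
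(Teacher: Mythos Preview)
Your argument is correct and follows exactly the route the paper indicates: the paper does not spell out a formal proof of this Fact but reduces it in one sentence to the identity $H\nu=\tfrac{2}{\Lambda}\,f_{uv}$ on a null coordinate system, and your proof simply fills in the details of that reduction in both directions. The only point you add beyond the paper's sketch is the linear-algebra lemma that two orthogonal null vectors in $\mathbb{L}^3$ must be parallel, which is indeed the right way to pass from linear independence of $\varphi',\psi'$ to nondegeneracy of $\mathrm{I}$.
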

\if0
We note that the decomposition (\ref{null curves decomposition}) is unique in the following sense: 
\begin{lemma}\label{null curves decomposition2}
If we take another null coordinate system $(\xi, \eta)$ whose intersection with $(u,v)$ is not empty and $f$ can be written as 
\begin{equation*}
f(\xi,\eta)=\frac{\tilde{\varphi}(\xi)+\tilde{\psi}(\eta)}{2}\quad \text{on $(\xi, \eta)$.}
 \end{equation*}
 Then either there are smooth functions $\tilde{\xi}$ and $\tilde{\eta}$ such that $\xi=\tilde{\xi}(u)$, $\eta=\tilde{\eta}(v)$ and 
 \begin{equation*}
 \varphi(u)=\tilde{\varphi}(\tilde{\xi}(u))+c_1\quad \text{and }\quad \psi(v)=\tilde{\psi}(\tilde{\eta}(v))+c_2
\end{equation*}
for some real constants $c_1$ and $c_2$, or there are smooth functions $\tilde{\xi}$ and $\tilde{\eta}$ such that $\xi=\tilde{\xi}(v)$, $\eta=\tilde{\eta}(u)$ and 
 \begin{equation*}
 \varphi(u)=\tilde{\psi}(\tilde{\eta}(u))+c_3\quad \text{and }\quad \psi(v)=\tilde{\varphi}(\tilde{\xi}(u))+c_4
\end{equation*}
for some real constants $c_3$ and $c_4$.
\end{lemma}

\begin{proof}
By the chain rule, we have $\varphi_u=\tilde{\varphi}_\xi\xi_u+\tilde{\psi}_\eta \eta_u$ and $\psi_v=\tilde{\varphi}_\xi\xi_v+\tilde{\psi}_\eta \eta_v$, and hence
\begin{equation}\label{eqs:xi-eta}
0=\langle \varphi_u,\varphi_u \rangle=2\xi_u\eta_u\langle \tilde{\varphi}_\xi,\tilde{\psi}_\eta \rangle \quad \text{and}\quad 0=\langle \psi_v,\psi_v \rangle=2\xi_v\eta_v\langle \tilde{\varphi}_\xi, \tilde{\psi}_\eta \rangle.
\end{equation}
Since $f$ is immersed, $\langle \tilde{\varphi}_\xi, \tilde{\psi}_\eta \rangle\neq0$ holds at each point. If $\xi_v=0$ at a point, then $\xi_u\neq0$ and $\eta_u=0$ on an open neighborhood containing this point by the equations (\ref{eqs:xi-eta}). Hence we obtain $\xi_v=0$ on the open neighborhood as above. Therefore, there are smooth functions $\tilde{\xi}$ and $\tilde{\eta}$ such that $\xi=\tilde{\xi}(u)$, $\eta=\tilde{\eta}(v)$ and
 \begin{equation*}
 \varphi_u=\tilde{\varphi}_\xi\tilde{\xi}_u\quad \text{and }\quad \psi(v)=\tilde{\psi}\tilde{\eta}_v.
\end{equation*}
By integrating these equations, we obtain the desired result. The proof for the case that $\eta_v=0$ at a point is same as before.
\end{proof}

Lemma \ref{null curves decomposition2} says that null coordinate transformations on a timelike minimal surface correspond to reparametrizations of the two null curves which generate the surface.
\fi

In this paper, we consider the following class of timelike minimal surfaces with singular points of rank one, which was introduced in \cite{T} (see also Definition \ref{def:minface} in Appendix \ref{Appendix:A}):
\begin{definition}\label{def:minface0}A smooth map $f:\Sigma \longrightarrow \mathbb{L}^3$ is called a {\it minface} if at each point of $\Sigma$ there exists a local coordinate system $(u,v)$ in a domain $U$, functions $g_1=g_1(u)$, $g_2=g_2(v)$, and 1-forms $\omega_1=\hat{\omega}_1(u)du$, $\omega_2=\hat{\omega}_2(v)dv$ with $g_1(u)g_2(v)\neq1$ on an open dense set of $U$ and $\hat{\omega}_1\neq 0$, $\hat{\omega}_2\neq 0$ at each point on $U$ such that $f$ can be decomposed into two null curves:
\begin{align}\label{eq:p-Weierstrass_real1}
  f(u,v)
  &=\frac{1}{2}\int^u_{u_0}\left( -1-(g_1)^2,1-(g_1)^2, 2g_1 \right)\omega_1\nonumber\\
  &+\frac{1}{2}\int^v_{v_0}\left( 1+(g_2)^2,1-(g_2)^2, -2g_2 \right)\omega_2 
    +f(u_0,v_0).
\end{align}
We denote these two null curves by $\varphi=\varphi(u)$ and $\psi=\psi(v)$. The quadruple $(g_1, g_2,\omega_1, \omega_2)$ is called the {\it real Weierstrass data}.
\end{definition}
A {\it singular point} of a minface $f$ is a point of $\Sigma$ on which $f$ is not immersed, and the set of singular points on $U$ of a minface $f$ corresponds to the set $\{(u,v)\in U\mid g_1(u)g_2(v)=1\}$. 
\begin{remark}
In \cite{T}, Takahashi originally gave the notion of minfaces as Definition \ref{def:minface} in Appendix \ref{Appendix:A} by using the notion of para-Riemann surfaces. To study the local behavior of the Gaussian curvature near singular points of timelike minimal surfaces, we adopt the above definition. In Appendix \ref{Appendix:A}, we prove the representation formula (\ref{eq:p-Weierstrass_real1}) from the original definition of minfaces (Fact \ref{theorem:p-Weierstrass}) and give a precise description of geometry of minfaces.
\end{remark}
\subsection{Null curves}\label{subsection:null_curves}
In this subsection, we describe some notions of null curves. 
\begin{definition}[cf. \cite{FKKRSUYY_Okayama,O}]

A null curve $\gamma=\gamma(t)$ in $\mathbb{L}^3$ 
 is called {\it degenerate} or {\it non-degenerate at $t$} if $\gamma'\times \gamma''= 0$ or $\gamma'\times \gamma''\neq 0$ at $t$, respectively. If $\gamma$ is non-degenerate everywhere, it is called a {\it non-degenerate} null curve.
\end{definition} 
A null curve which is degenerate everywhere is a straight line with a lightlike direction. As pointed out in Section 2 in \cite{O}, the non-degeneracy of a null curve is characterized by the following conditions.

\begin{lemma}[cf. \cite{O}]\label{lemma:non-deg}
For a null curve $\gamma=\gamma(t)$ in $\mathbb{L}^3$ 
the following (i), (ii) and (iii) are equivalent:
\begin{itemize}
\item[(i)] $\gamma$ is non-degenerate at $t$,
\item[(ii)] $\gamma''(t)$ is a non-zero spacelike vector, that is, $\langle \gamma''(t), \gamma''(t) \rangle >0$,
\item[(iii)] $\det\left(\gamma'(t)\ \gamma''(t)\ \gamma'''(t)\right)\neq 0$.
\end{itemize}\end{lemma}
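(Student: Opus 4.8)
The plan is to route everything through two ingredients: the derivatives of the null condition, and the linear-algebraic structure of the orthogonal complement of a null vector. Since $\gamma$ is a null curve, $\langle \gamma',\gamma'\rangle \equiv 0$; differentiating once gives $\langle \gamma',\gamma''\rangle \equiv 0$, and differentiating the latter gives $\langle \gamma'',\gamma''\rangle + \langle \gamma',\gamma'''\rangle = 0$, i.e. $\langle \gamma',\gamma'''\rangle = -\langle \gamma'',\gamma''\rangle$. I would record these three relations at the outset; they are the only analytic input, and the remaining work is pointwise linear algebra in $\mathbb{L}^3$.

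First I would prove $(\mathrm{i})\Leftrightarrow(\mathrm{ii})$. Regularity forces $\gamma'(t)\neq 0$, and $\langle\gamma',\gamma''\rangle=0$ places $\gamma''(t)$ in the orthogonal complement $(\gamma'(t))^\perp$. The structural fact I rely on is that the orthogonal complement of a null vector $u$ is a degenerate plane whose nonzero vectors are either spacelike or scalar multiples of $u$ (themselves null). Granting this, $\gamma''(t)$ is spacelike precisely when it is nonzero and not a multiple of $\gamma'(t)$, i.e. precisely when $\gamma'(t),\gamma''(t)$ are linearly independent, i.e. precisely when $\gamma'\times\gamma''\neq 0$. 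This is exactly $(\mathrm{ii})\Leftrightarrow(\mathrm{i})$.

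Next I would establish $(\mathrm{i})\Leftrightarrow(\mathrm{iii})$ without invoking any sign convention for the Lorentzian cross product, by reading both conditions as linear independence statements: $\det(\gamma'\ \gamma''\ \gamma''')\neq 0$ means $\{\gamma',\gamma'',\gamma'''\}$ is a basis, while $\gamma'\times\gamma''\neq 0$ means $\{\gamma',\gamma''\}$ is independent. Thus $(\mathrm{iii})\Rightarrow(\mathrm{i})$ is immediate. For the converse, assume $(\mathrm{i})$, so by the previous paragraph $\gamma''$ is spacelike, $\langle\gamma'',\gamma''\rangle>0$. If $\{\gamma',\gamma'',\gamma'''\}$ were dependent, then $\gamma'''=a\gamma'+b\gamma''$ for scalars $a,b$, and pairing with $\gamma'$ would give $\langle\gamma',\gamma'''\rangle=0$; but the differentiated relation yields $\langle\gamma',\gamma'''\rangle=-\langle\gamma'',\gamma''\rangle<0$, a contradiction. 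Hence $\{\gamma',\gamma'',\gamma'''\}$ is independent and $(\mathrm{iii})$ holds.

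The one step needing genuine care, and the heart of the argument, is the structural lemma about $(\gamma'(t))^\perp$. I would prove it with an explicit null frame: for $u=\gamma'(t)\in\mathbb{Q}^2$ one produces a null vector $\ell$ with $\langle u,\ell\rangle\neq 0$ (e.g. reflecting the spatial part) together with a unit spacelike $e$ orthogonal to both, so $\{u,\ell,e\}$ is a basis. Writing $w=\alpha u+\beta\ell+\gamma e$, the condition $\langle w,u\rangle=0$ reduces to $\beta=0$, whence $\langle w,w\rangle=\gamma^2\langle e,e\rangle\geq 0$, with equality exactly when $\gamma=0$, i.e. when $w=\alpha u$. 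This yields the dichotomy ``spacelike or proportional to $u$'' in one line. Once this is in place, the two equivalences above assemble into the full chain $(\mathrm{i})\Leftrightarrow(\mathrm{ii})\Leftrightarrow(\mathrm{iii})$. The only mild subtlety to watch is the everywhere-nonvanishing of $\gamma'$ from regularity, which is what rules out the degenerate reading of the complement at isolated points.
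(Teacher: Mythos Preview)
Your argument is correct. The paper itself does not supply a proof of this lemma; it is stated with a reference to \cite{O} and used as a black box. So there is nothing to compare against in the paper, but your route---differentiating the null condition to obtain $\langle\gamma',\gamma''\rangle=0$ and $\langle\gamma',\gamma'''\rangle=-\langle\gamma'',\gamma''\rangle$, then invoking the structure of the orthogonal complement of a null vector---is exactly the standard one and each step is sound. The only place worth a tiny polish is the implication $(\mathrm{i})\Rightarrow(\mathrm{iii})$: when you say ``if $\{\gamma',\gamma'',\gamma'''\}$ were dependent, then $\gamma'''=a\gamma'+b\gamma''$'', you are implicitly using the already-established independence of $\gamma',\gamma''$; it might help the reader to flag that explicitly. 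Otherwise the proof is complete and self-contained.
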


By Lemma \ref{lemma:non-deg}, we can introduce the following notions for non-degenerate null curves.

\begin{definition}[\cite{B, V}]
For a non-degenerate null curve $\gamma=\gamma(t)$, a parameter $t$ is called a {\it pseudo-arclength parameter} of $\gamma$ if  $\langle \gamma''(t), \gamma''(t) \rangle \equiv 1$.
\end{definition}

\begin{definition}\label{ori}
We define the {\it orientation} of a non-degenerate null curve $\gamma$ by the sign of $\det\left(\gamma'\ \gamma''\ \gamma'''\right)$.
\end{definition}
\begin{remark}\label{remark:ori}
If we take a pseudo-arclength parameter $s$, then $\det\left(\gamma'\ \gamma''\ \gamma'''\right)=\pm1$, which represents the orientation of $\gamma$. 
Moreover, the orientation of a non-degenerate null curve has the following geometric meaning: If we consider the projection of $\gamma'$, which is on the lightcone $\mathbb{Q}^2$, into the time slice $x^0=1$, then the projected curve on $\mathbb{S}^1= \{(1,x^1,x^2) \mid (1,x^1,x^2)\in \mathbb{Q}^2\}$ is anticlockwise if the orientation is positive, and clockwise if the orientation is negative as $x^0$ increases. See Figure \ref{Fig1} and Remark \ref{Milnor's study}. 

\begin{figure}[!h]
\begin{center}
\begin{tabular}{c}
\hspace{+2.0cm}
\begin{minipage}{0.4\hsize}
\begin{center}
\vspace{-0.8cm}
\includegraphics[clip,scale=0.30,bb=0 0 555 449]{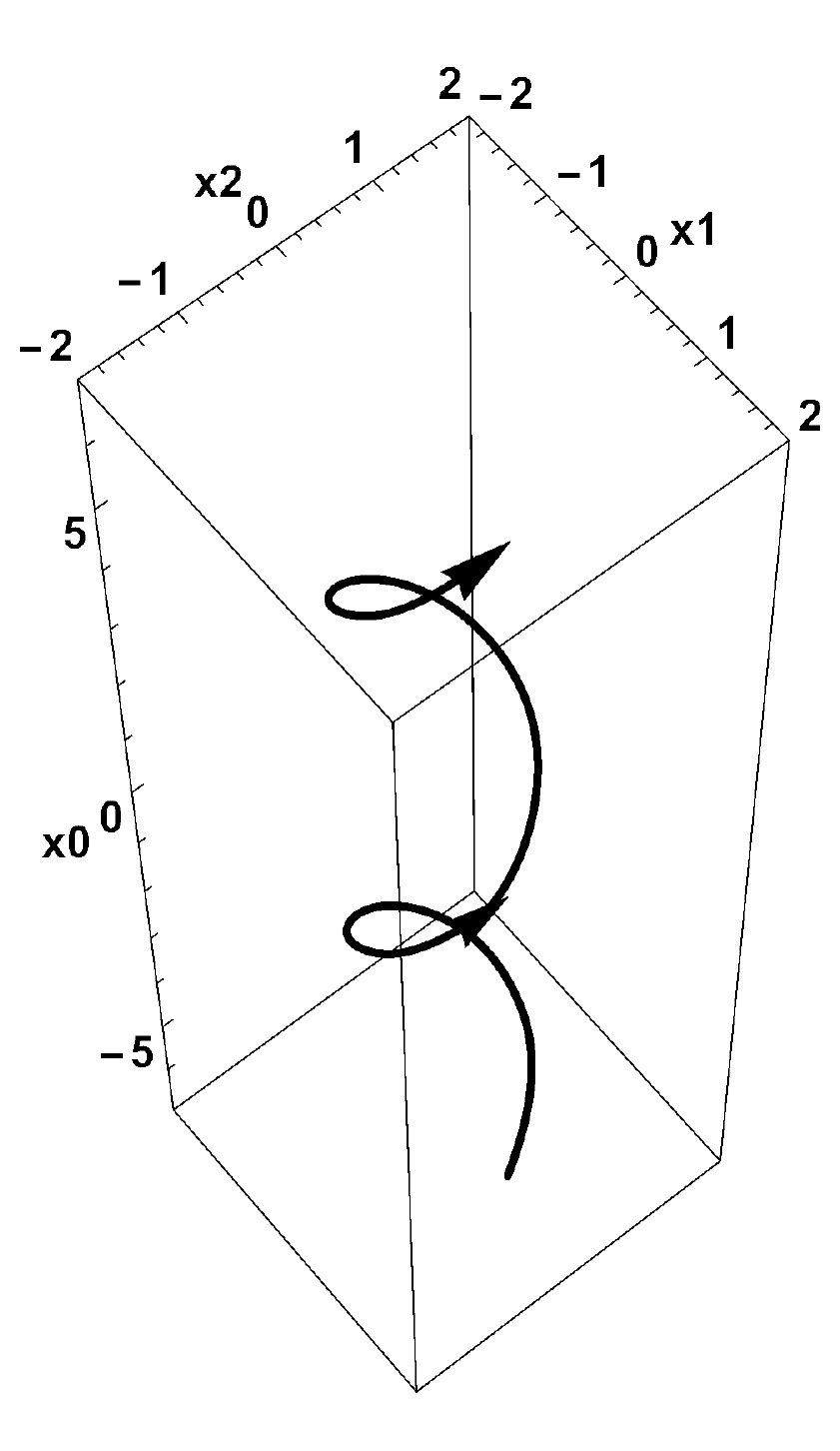}
\vspace{0.5cm}
\end{center}
\end{minipage}
\hspace{0.3cm}
\begin{minipage}{0.4\hsize}
\begin{center}
\vspace{-0.45cm}
\includegraphics[clip,scale=0.30,bb=0 0 555 449]{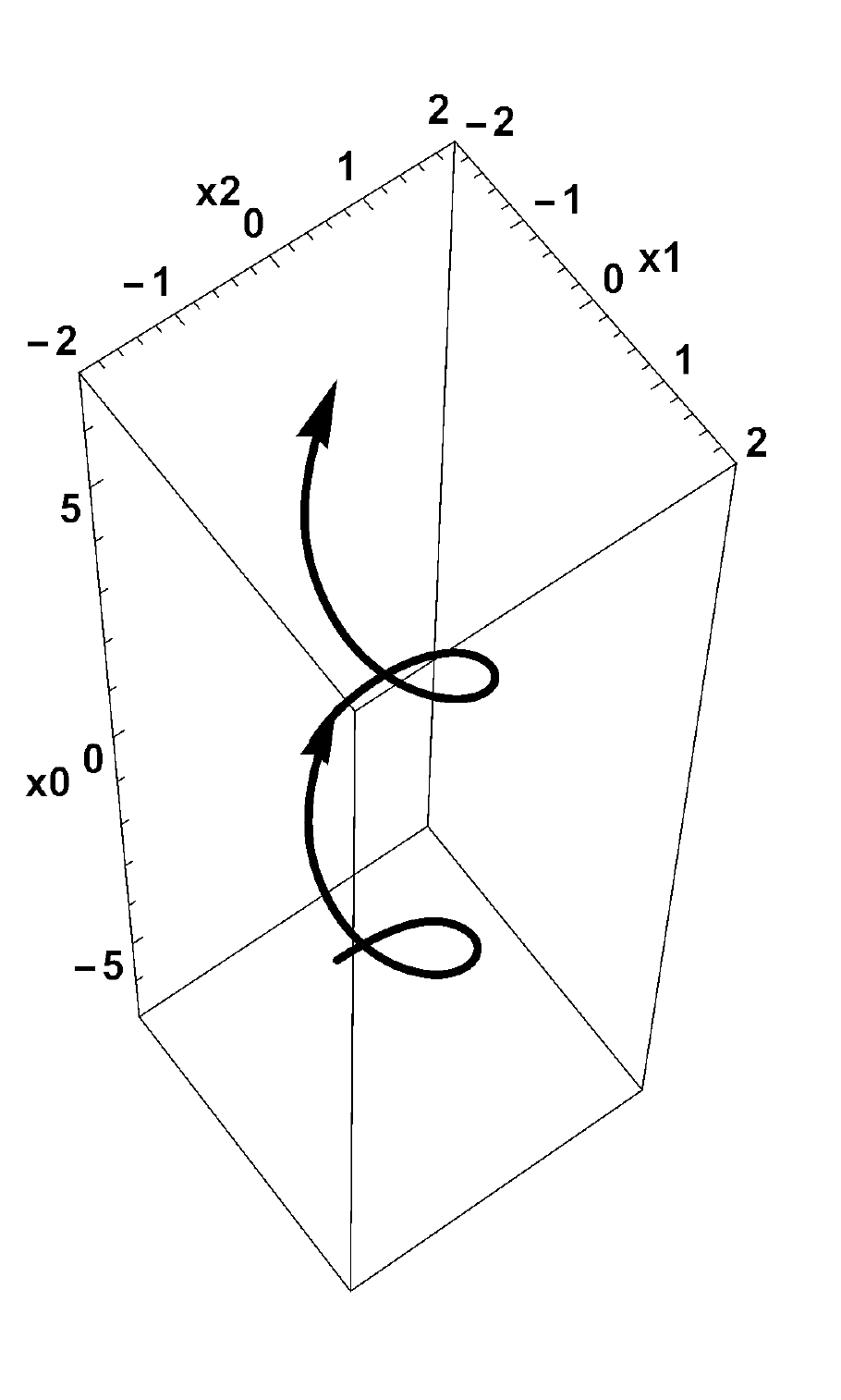}
\vspace{0.3cm}
\end{center}
\end{minipage}

\end{tabular}
\end{center}
\vspace{-0.8cm}
\caption{Examples of non-degenerate null curves with positive (the left figure) and negative orientation (the right figure).}\label{Fig1}

\end{figure}
\end{remark}

\section{The sign of the Gaussian curvature and orientations of null curves}\label{Sec.3}
In this section we give a characterization of flat points and investigate the sign of the Gaussian curvature of minfaces by using the notions of degeneracy and orientations of null curves. 
 \subsection{A characterization of flat points}

As we saw in Section \ref{Sec.2}, flat points on each minface consist of umbilic and quasi-umbilic points. First, we give a characterization of flat points of a minface from a viewpoint of null curves.

\begin{proposition}\label{prop:flat points}
Let $p$ be a regular point in a minface $f$. Then the following statements hold:
\begin{enumerate}
\item[(i)] $p$ is an umbilic point of $f$ if and only if the two null curves in the equation (\ref{null curves decomposition}) are degenerate at $p$.
\item[(ii)] $p$ is a quasi-umbilic point of $f$ if and only if only one of the two null curves in the equation (\ref{null curves decomposition}) is degenerate at $p$.
\end{enumerate}
\end{proposition}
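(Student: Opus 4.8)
The plan is to carry out the whole argument in the null coordinate system $(u,v)$ coming from the decomposition $f=(\varphi(u)+\psi(v))/2$ of Fact~\ref{Fact:McNertney}, and to read the umbilic/quasi-umbilic dichotomy off the shape operator written in these coordinates. First I would record the fundamental forms. Since $f_u=\tfrac12\varphi'$ and $f_v=\tfrac12\psi'$ are both lightlike, the metric takes the form $\mathrm{I}=2\Lambda\,du\,dv$ with $\Lambda=\tfrac14\langle\varphi',\psi'\rangle$, and $\Lambda\neq0$ is exactly the regularity condition. As $f_{uv}=0$, the mixed coefficient of the second fundamental form vanishes, $M=\langle f_{uv},\nu\rangle=0$, whereas $L=\langle f_{uu},\nu\rangle=\tfrac12\langle\varphi'',\nu\rangle$ and $N=\langle f_{vv},\nu\rangle=\tfrac12\langle\psi'',\nu\rangle$, where $\nu$ denotes the spacelike unit normal.

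Next I would compute the shape operator $S=\mathrm{I}^{-1}\mathrm{II}$. In the basis $(\partial_u,\partial_v)$ the metric is $\left(\begin{smallmatrix}0&\Lambda\\\Lambda&0\end{smallmatrix}\right)$, so one finds $S=\tfrac1\Lambda\left(\begin{smallmatrix}0&N\\L&0\end{smallmatrix}\right)$; this recovers $H=\tfrac12\tr S=0$ and $K=\det S=-LN/\Lambda^2$, consistent with minimality. Hence a regular point is flat precisely when $LN=0$. At such a point $S$ is the zero matrix if and only if $L=N=0$, which is exactly the umbilic case (here $\mathrm{II}$ being a scalar multiple of $\mathrm{I}$ forces the scalar to be $0$ because $H=0$); and $S$ is a nonzero nilpotent matrix, hence non-diagonalizable over $\mathbb{C}$, if and only if exactly one of $L,N$ vanishes, which is the quasi-umbilic case of Definition~\ref{def:quasi-umbilic}.

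It then remains to match the vanishing of $L$ (resp.\ $N$) with the degeneracy of $\varphi$ (resp.\ $\psi$). Differentiating $\langle\varphi',\varphi'\rangle\equiv0$ gives $\varphi''\perp\varphi'$, so in the frame $\{\varphi',\psi',\nu\}$ I may write $\varphi''=a\varphi'+b\psi'+c\nu$; pairing with $\varphi'$ and using $\langle\varphi',\psi'\rangle=4\Lambda\neq0$ forces $b=0$, while pairing with $\nu$ gives $c=\langle\varphi'',\nu\rangle=2L$. Since $\langle\varphi',\varphi'\rangle=\langle\varphi',\nu\rangle=0$ and $\langle\nu,\nu\rangle=1$, expanding then yields $\langle\varphi'',\varphi''\rangle=4L^2$. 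By the equivalence (i)$\Leftrightarrow$(ii) of Lemma~\ref{lemma:non-deg}, $\varphi$ is non-degenerate at $p$ if and only if $\langle\varphi'',\varphi''\rangle>0$, i.e.\ if and only if $L\neq0$; symmetrically $\psi$ is degenerate at $p$ if and only if $N=0$. Combining with the previous paragraph, $p$ is umbilic iff $L=N=0$ iff both null curves are degenerate, proving (i), and $p$ is quasi-umbilic iff exactly one of $L,N$ vanishes iff exactly one of the two null curves is degenerate, proving (ii).

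I expect the main obstacle to be the clean bookkeeping of the second paragraph together with the identity $\langle\varphi'',\varphi''\rangle=4L^2$: the latter is what guarantees that $\varphi''$ is never timelike (so that Lemma~\ref{lemma:non-deg} applies with a genuine sign dichotomy) and, crucially, that the degeneracy of $\varphi$ is governed by $L$ alone and that of $\psi$ by $N$ alone. This decoupling of the two null curves is precisely what separates the umbilic case (both degenerate) from the quasi-umbilic case (only one degenerate); the remaining verifications, including the sign of $\Lambda$ and the explicit entries of $S$, are routine.
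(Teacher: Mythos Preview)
Your proposal is correct and follows essentially the same route as the paper: compute the shape operator in null coordinates as $S=\tfrac{1}{\Lambda}\left(\begin{smallmatrix}0&N\\L&0\end{smallmatrix}\right)$, read off that umbilic means $L=N=0$ and quasi-umbilic means exactly one vanishes, and then identify the vanishing of $L$ (resp.\ $N$) with the degeneracy of $\varphi$ (resp.\ $\psi$) via the decomposition $\varphi''=a\varphi'+2L\,\nu$. The only cosmetic difference is that the paper asserts this decomposition directly and invokes the cross-product definition of degeneracy, whereas you supply the extra bookkeeping that $b=0$ and pass through the identity $\langle\varphi'',\varphi''\rangle=4L^2$ together with Lemma~\ref{lemma:non-deg}(ii); both arrive at the same conclusion.
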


\begin{proof}
If we take a null coordinate system $(u,v)$ on which $f$ is written as (\ref{null curves decomposition}), then the first and the second fundamental forms can be written as follows:
\begin{center}
$\mathrm{I}=2\Lambda dudv$\quad and\quad $\mathrm{II}=Qdu^2+Rdv^2$.
\end{center}
Therefore, the shape operator is 
\begin{equation}\label{condition1}
\hspace{0.1cm} S=\mathrm{I}^{-1}\mathrm{II}=
\left(\begin{array}{cc} 0 & \frac{1}{\Lambda} \\ \frac{1}{\Lambda} & 0 \\ \end{array} \right)
\left(\begin{array}{cc} Q & 0 \\ 0 & R \\ \end{array} \right)=
\left(\begin{array}{cc} 0 & \frac{R}{\Lambda} \\ \frac{Q}{\Lambda} & 0 \\ \end{array} \right).
\end{equation}

On the other hand, we can see that there exists a real number $a$ such that
\begin{center}
$2f_{uu}(p)=\varphi''(p)=a\varphi'(p)+2Q(p)\nu(p)$.
\end{center}
Therefore, $\varphi$ is degenerate at $p$ if and only if $ Q(p)=0$. By using (\ref{condition1}), we obtain the desired result.
 \end{proof}
 \begin{remark}
The differential coefficients $Q$ and $R$ are called (coefficients of) {\it Hopf differentials} on a timelike surface, which was introduced in \cite{IT}. The degenerate points of two null curves $\varphi$ and $\psi$ correspond to zeros of these Hopf differentials $Q$ and $R$, respectively.
 \end{remark}
\begin{example}\label{ex:K-change}
Let us take the two null curves $\varphi$ and $\psi$
\begin{equation*}
\varphi(u)=\left(u+\frac{u^5}{5}, \frac{2}{3}u^3, u-\frac{u^5}{5}\right),\quad \psi(v)=\left(-v-\frac{v^5}{5}, \frac{2}{3}v^3, v-\frac{v^5}{5}\right),
\end{equation*}
which are degenerate at the origin, and consider the timelike minimal surface constructed by the equation (\ref{null curves decomposition}). The Gaussian curvature $K$ of this surface is $K=-\frac{4uv}{(1+u^2v^2)^8}$. Proposition \ref{prop:flat points} states that the set of flat points of this surface consists of quasi-umbilic points except the intersection and the intersection is an umbilic point. See Figure \ref{Fig2}. As this example, the quasi-umbilic points (and also the umbilic points) of a timelike minimal surface are not isolated in general. 
\begin{figure}[htbp]
 \vspace{-1.4cm}
\begin{center}
\includegraphics[clip,scale=0.30,bb=0 0 610 350]{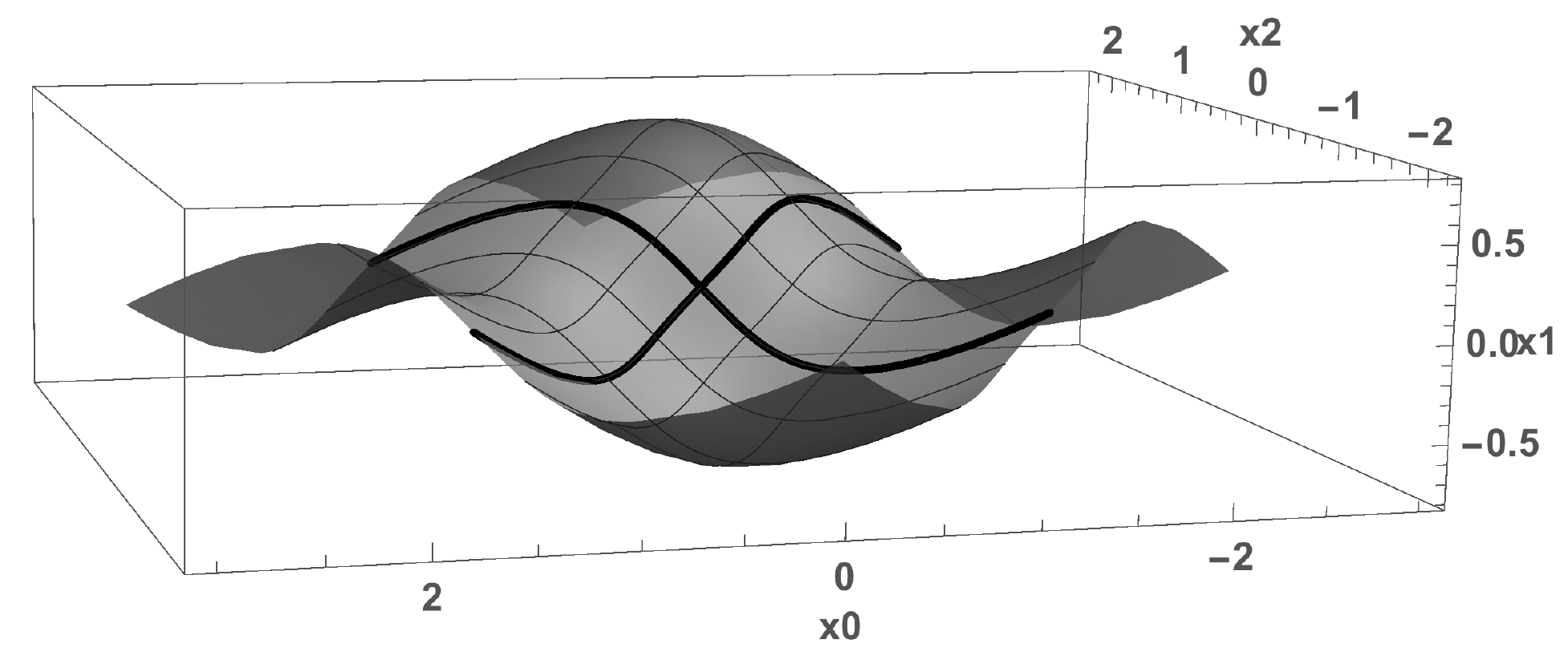}
\caption{An example on which the sign of the Gaussian curvature changes along quasi-umbilic curves (black curves except the intersection).}\label{Fig2}
\end{center}
\end{figure}
\end{example}

 \subsection{The sign of the Gaussian curvature near non-flat points}
In the previous subsection we gave a characterization of flat points using the notion of degeneracy of null curves of a minface. In this subsection we investigate how to determine the sign of the Gaussian curvature and give a construction method of conformal curvature line (resp. conformal asymptotic) coordinate systems near non-flat points of a minface based on the study by Takahashi \cite{T}.

First we consider the two null curves $\varphi=\varphi(u)$ and $\psi=\psi(v)$ in the equation (\ref{eq:p-Weierstrass_real1}). Away from flat points, the two null curves $\varphi$ and $\psi$ are non-degenerate by Proposition \ref{prop:flat points}, and hence we can take pseudo-arclength parameters of $\varphi$ and $\psi$ near non-flat points. Since $\langle \varphi'', \varphi'' \rangle =4{g_1'}^2\hat{\omega}_1^2$, $\hat{\omega}_1\neq 0$, and (ii) of Lemma \ref{lemma:non-deg}, $g_1'\neq0$ near each non-flat point. Moreover, the parameter $u$ is a pseudo-arclength parameter of $\varphi$ if and only if $\hat{\omega}_1$ and $g_1$ satisfy 
\begin{equation*}\label{eq:pseudo-arc-phi}
g_1'\hat{\omega}_1(u)=-\frac{\varepsilon_\varphi}{2},\quad \varepsilon_\varphi=\pm1.
\end{equation*}

After a straightforward calculation, we obtain the equation $\det(\varphi', \varphi'', \varphi''')=\varepsilon_\varphi$, that is, $\varepsilon_\varphi$ is nothing but the orientation of $\varphi$ which was introduced in Definition \ref{ori}. Similarly, the parameter $v$ is a pseudo-arclength parameter of $\psi$ if and only if $\hat{\omega}_2$ and $g_2$ satisfy 
\begin{equation*}\label{eq:pseudo-arc-psi}
g_2'\hat{\omega}_2(v)=-\frac{\varepsilon_\psi}{2},\quad \varepsilon_\psi=\pm1,
\end{equation*}
and $\varepsilon_\psi$ also represents the orientation of $\psi$. Therefore we obtain the following formula near non-flat points
\begin{align}\label{eq:p-Weierstrass_real}
  f(u,v)
  &=\frac{1}{2}\int^u_{u_0}\left( -1-(g_1)^2, 1-(g_1)^2, 2g_1 \right)\frac{-\varepsilon_\varphi}{2g'_1}du \nonumber\\
  &+\frac{1}{2}\int^v_{v_0}\left( 1+(g_2)^2, 1-(g_2)^2, -2g_2 \right)\frac{-\varepsilon_\psi}{2g'_2}dv  
    +f(u_0,v_0).
\end{align}
From now on, we consider the Lorentz isothermal coordinate system $(x, y)=(\frac{u-v}{2}, \frac{u+v}{2})$ associated to the null coordinate system $(u, v)$ constructed from pseudo-arclength parameters of $\varphi$ and $\psi$. On the coordinate system, the first and the second fundamental forms $\mathrm{I}$ and $\mathrm{II}$ can be written as follows:
\begin{equation*}\label{eq:I and II}
\mathrm{I} =  \frac{\varepsilon_\varphi \varepsilon_\psi}{4g_1'g_2'}(1-g_1g_2)^2(-dx^2+dy^2), \quad \mathrm{II} =(\frac{\varepsilon_\varphi}{2}-\frac{\varepsilon_\psi}{2})(dx^2+dy^2)+(\varepsilon_\varphi+\varepsilon_\psi)dxdy.
\end{equation*}
We denote the conformal factor $\frac{\varepsilon_\varphi \varepsilon_\psi}{4g_1'g_2'}(1-g_1g_2)^2$ by $E$. Then the Gaussian curvature $K$ of the minface is written as
\begin{equation}\label{eq:Gauss}
K=\frac{\varepsilon_\varphi \varepsilon_\phi}{E^2}.
\end{equation}
Therefore, the sign of the Gaussian curvature of the non-flat points of a minface is determined only by the orientations of two null curves $\varphi$ and $\psi$. In summary, we have obtained the following theorem, which also gives a construction method of conformal curvature line coordinate systems and conformal asymptotic coordinate systems by using pseudo-arclength parameters. 
\begin{theorem}\label{theorem:p-Weierstrass_real}
Away from flat points, each minface $f : \Sigma \rightarrow \mathbb{L}^3$ can be written locally as the equation (\ref{eq:p-Weierstrass_real}). The Gaussian curvature $K$ is positive (resp. negative) if and only if $\varphi$ and $\psi$ have the same orientation (resp. different orientations). In this case, the Lorentz isothermal coordinate system $(x, y)=(\frac{u-v}{2}, \frac{u+v}{2})$ associated to the null coordinate system $(u, v)$ in (\ref{eq:p-Weierstrass_real}) is a conformal asymptotic (resp. conformal curvature line) coordinate system.
\end{theorem}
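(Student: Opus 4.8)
The plan is to read off the conclusion from the fundamental forms that the discussion preceding the statement has already assembled, so that the proof reduces to a short case distinction on the orientation signs $\varepsilon_\varphi,\varepsilon_\psi\in\{\pm1\}$. First I would note that the representation (\ref{eq:p-Weierstrass_real}) needs no fresh argument: away from flat points, Proposition \ref{prop:flat points} guarantees that both generating null curves $\varphi$ and $\psi$ are non-degenerate, so each admits a pseudo-arclength parameter, and substituting the two normalizations $g_1'\hat{\omega}_1=-\varepsilon_\varphi/2$ and $g_2'\hat{\omega}_2=-\varepsilon_\psi/2$ into (\ref{eq:p-Weierstrass_real1}) yields exactly (\ref{eq:p-Weierstrass_real}). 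At the same time I would record, as already computed above, that $\det(\varphi'\ \varphi''\ \varphi''')=\varepsilon_\varphi$ and $\det(\psi'\ \psi''\ \psi''')=\varepsilon_\psi$, so these signs are precisely the orientations of $\varphi$ and $\psi$ in the sense of Definition \ref{ori}.

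Next I would settle the sign of $K$. Passing to the associated Lorentz isothermal coordinates $(x,y)=((u-v)/2,(u+v)/2)$, the first and second fundamental forms take the stated closed form, with conformal factor $E=\frac{\varepsilon_\varphi\varepsilon_\psi}{4g_1'g_2'}(1-g_1g_2)^2$. Since $g_1g_2\neq1$ away from singular points and $g_1',g_2'\neq0$ away from flat points, the factor $E$ is nowhere zero, so $E^2>0$; formula (\ref{eq:Gauss}) then shows that the sign of $K$ equals the sign of $\varepsilon_\varphi\varepsilon_\psi$. Equal orientations give $\varepsilon_\varphi\varepsilon_\psi=1$ and hence $K>0$, while opposite orientations give $\varepsilon_\varphi\varepsilon_\psi=-1$ and $K<0$, which is the asserted equivalence.

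For the coordinate-system claim I would inspect $\mathrm{II}=(\frac{\varepsilon_\varphi}{2}-\frac{\varepsilon_\psi}{2})(dx^2+dy^2)+(\varepsilon_\varphi+\varepsilon_\psi)\,dx\,dy$ in the two cases. When $\varepsilon_\varphi=\varepsilon_\psi$ the diagonal coefficients vanish and $\mathrm{II}=2\varepsilon_\varphi\,dx\,dy$, so $\mathrm{II}(\partial_x,\partial_x)=\mathrm{II}(\partial_y,\partial_y)=0$; the coordinate curves are therefore asymptotic and $(x,y)$ is a conformal asymptotic coordinate system, matching $K>0$. When $\varepsilon_\varphi=-\varepsilon_\psi$ the cross term vanishes and $\mathrm{II}=\varepsilon_\varphi(dx^2+dy^2)$ is diagonal; since $\mathrm{I}$ is already diagonal in $(x,y)$, both fundamental forms are simultaneously diagonalized and $(x,y)$ is a conformal curvature line coordinate system, matching $K<0$.

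I do not expect a genuine obstacle, since the substantive computations — the pseudo-arclength normalization, the evaluation of $\mathrm{I}$ and $\mathrm{II}$, and formula (\ref{eq:Gauss}) — are all carried out in the text preceding the statement. The only points needing care are bookkeeping ones: confirming that $E\neq0$ on the relevant region so that $K$ is well defined and its sign is governed purely by $\varepsilon_\varphi\varepsilon_\psi$, and checking that the vanishing patterns of the coefficients of $\mathrm{II}$ correspond correctly to asymptotic directions (vanishing diagonal coefficients) versus principal directions (vanishing off-diagonal coefficient, relative to the already diagonal $\mathrm{I}$). The whole argument is thus a clean case distinction on the product of the two orientation signs.
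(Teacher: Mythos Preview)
Your proposal is correct and follows essentially the same approach as the paper: the theorem is stated as a summary of the computations immediately preceding it, and your argument simply recapitulates those steps (pseudo-arclength normalization via Proposition~\ref{prop:flat points}, the explicit forms of $\mathrm{I}$ and $\mathrm{II}$, and formula~(\ref{eq:Gauss})) together with the obvious case distinction on $\varepsilon_\varphi\varepsilon_\psi$.
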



\begin{remark}\label{Milnor's study}
In Remark 1 in \cite{Milnor2}, 
Milnor normalized null coordinates $u, v$ so that $u$ and $v$ are \emph{Euclidean} arclength parameters of $\varphi/2$ and $\psi/2$ in the equation (\ref{null curves decomposition}), that is, on this coordinate system a timelike minimal surface $f$ can be written as 
\begin{align*}
f(u,v)&=\frac{1}{\sqrt{2}}\left(u-u_0, \int^u_{u_0}\cos A(\tau)d\tau, \int^u_{u_0}\sin A(\tau)d\tau \right) \nonumber\\
&+\frac{1}{\sqrt{2}}\left(v-v_0, \int^v_{v_0}\cos B(\tau)d\tau, \int^v_{v_0}\sin B(\tau)d\tau \right)+f(u_0,v_0),
\end{align*}
where, $A$ and $B$ are called the {\it Weierstrass functions}. By using these functions, Milnor gave the following formula giving control over the sign of the Gaussian curvature $K$:
\begin{equation}\label{eq:Milnor}
\mathrm{sgn}K=\mathrm{sgn}(A'B')
\end{equation}
After a straightforward calculation, we get $\det(\varphi', \varphi'', \varphi''')=(A')^3$, and hence $\mathrm{sgn}(A'B')=\varepsilon_\varphi \varepsilon_\psi$. About the sign of the Gaussian curvature, the equation (\ref{eq:Gauss}) is nothing but (\ref{eq:Milnor}). 
\end{remark}

\section{Behavior of the Gaussian curvature near singular points}\label{Sec.4}
In this section we investigate the behavior of the Gaussian curvature near non-degenerate singular points on a minface by using some notions about null curves given in Section \ref{subsection:null_curves} and results for the Gaussian curvature near regular points given in Section \ref{Sec.3}. 
\subsection{Frontals and fronts}
First we recall the singularity theory of frontals and fronts, see \cite{A, FSUY, SUY09, UY} for details. Let $U$ be a domain in $\mathbb{R}^2$ and $u$, $v$ are local coordinates on $U$. A smooth map $f$ : $U \longrightarrow \mathbb{R}^3$ is called a $frontal$ if there exists a unit vector field $n$ on $U$ such that $n$ is perpendicular to $df(TU)$ with respect to the Euclidean metric $\langle \  ,\  \rangle_E$ of $\mathbb{R}^3$. We call $n$ the {\it unit normal vector field} of a frontal $f$. Moreover if the {\it Legendrian lift} $L$ of a frontal $f$ 
\begin{equation*}
L=(f,n): U \longrightarrow \mathbb{R}^3\times S^2
\end{equation*}
is an immersion, $f$ is called a $front$. A point $p\in U$ where $f$ is not an immersion is called a {\it singular point} of the frontal $f$, and we call the set of singular points of $f$ the {\it singular set}. We can take the following smooth function $\lambda$ on $U$ 
\begin{equation*}
\lambda = \det(f_u, f_v, n)=\langle f_u \times_E f_v, n \rangle_E,
\end{equation*}
where $\times_E$ is the Euclidean vector product of $\mathbb{R}^3$. The function $\lambda$ is called the {\it signed area density function} of the frontal $f$. A singular point $p$ is called {\it non-degenerate} if $d\lambda_p \neq0$. 
The set of singular points of the frontal $f$ corresponds to zeros of $\lambda$. Let us assume that $p$ is a non-degenerate singular point of a frontal $f$, then there exists a regular curve $\gamma=\gamma(t): (-\varepsilon, \varepsilon)\longrightarrow U$ such that $\gamma(0)=p$ and the image of $\gamma$ coincides with the singular set of $f$ around $p$. We call $\gamma$ the {\it singular curve} and the direction of $\gamma'=\frac{d\gamma}{dt}$ the {\it singular direction}. On the other hand, there exists a non-zero vector $\eta \in \mathrm{Ker}(df_p)$ because $p$ is non-degenerate. We call $\eta$ the {\it null direction}.

Let $U_i$, $i=1, 2$ be domains of $\mathbb{R}^2$ and $p_i$, $i=1, 2$ be points in $U_i$. Two smooth maps $f_1:U_1\longrightarrow \mathbb{R}^3$ and $f_2:U_2\longrightarrow\mathbb{R}^3$ are {\it $\mathcal{A}$-equivalent} (or {\it right-left equivalent}) at the points $p_1 \in U_1$ and $p_2 \in U_2$ if there exist local diffeomorphisms $\Phi$ of $\mathbb{R}^2$ with $\Phi(p_1)=p_2$ and $\Psi$ of $\mathbb{R}^3$ with $\Psi(f_1(p_1))=f_2(p_2)$ such that $f_2 = \Psi \circ f_1\circ \Phi^{-1}$. A singular point $p$ of a map $f:U\longrightarrow \mathbb{R}^3$ is called a {\it cuspidal edge}, {\it swallowtail} or {\it cuspidal cross cap} if the map $f$ at $p$ is $\mathcal{A}$-equivalent to the following map $f_C$, $f_S$ or $f_{CCR}$ at the origin, respectively (see Figure \ref{Fig3}):
\begin{center}
$f_C(u,v)=(u^2,u^3,v)$, $f_S(u,v)=(3u^4+u^2v,4u^3+2uv,v)$, $f_{CCR}(u,v)=(u,v^2,uv^3)$.
\end{center}
Cuspidal edges and swallowtails are non-degenerate singular points of fronts, and these two types of singular points are generic singularities of fronts (cf. \cite{AGV}). In addition to these singular points, cuspidal cross caps often appear on minfaces, which are not singular points of fronts but are non-degenerate singular points of frontals.
\vspace{-0.2cm}

\begin{figure}[!h]
\begin{center}
\begin{tabular}{c}
\begin{minipage}{0.4\hsize}
\begin{center}
\vspace{-1.7cm}
\includegraphics[clip,scale=0.30,bb=0 0 500 409]{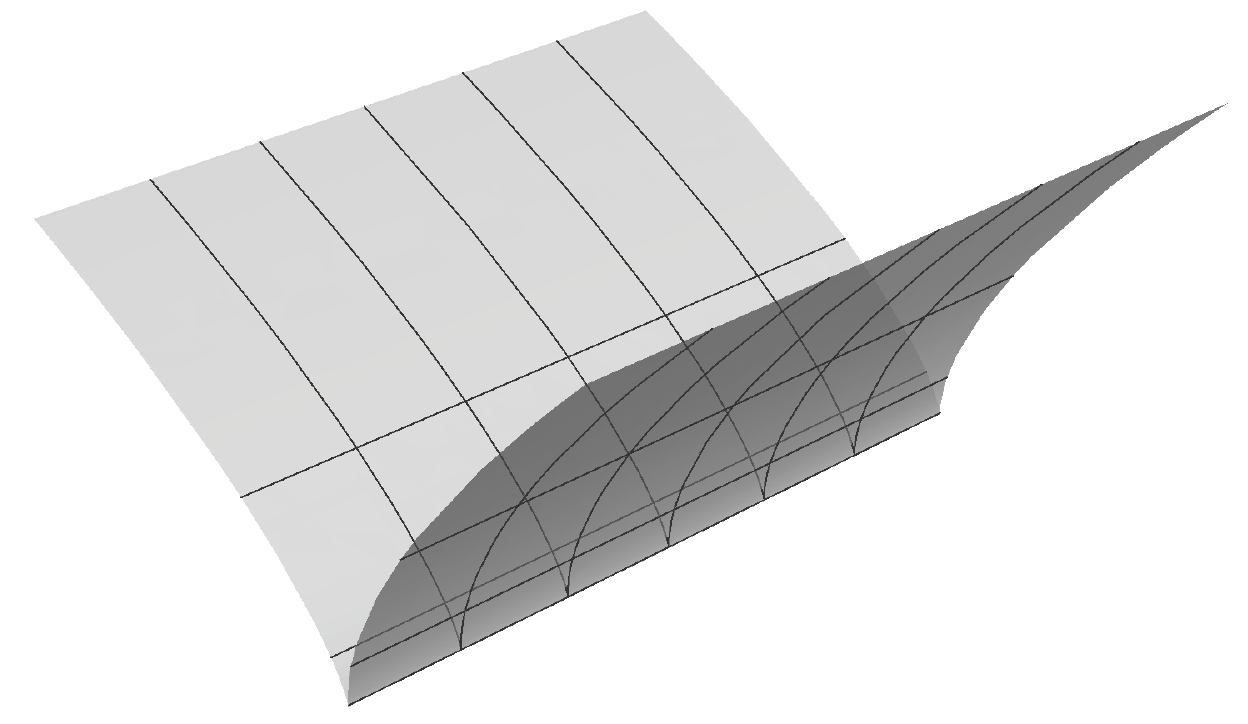}
\vspace{0.5cm}
\end{center}
\end{minipage}
\hspace{-1.7cm}
\begin{minipage}{0.4\hsize}
\begin{center}
\vspace{-3.0cm}
\includegraphics[clip,scale=0.30,bb=0 0 455 459]{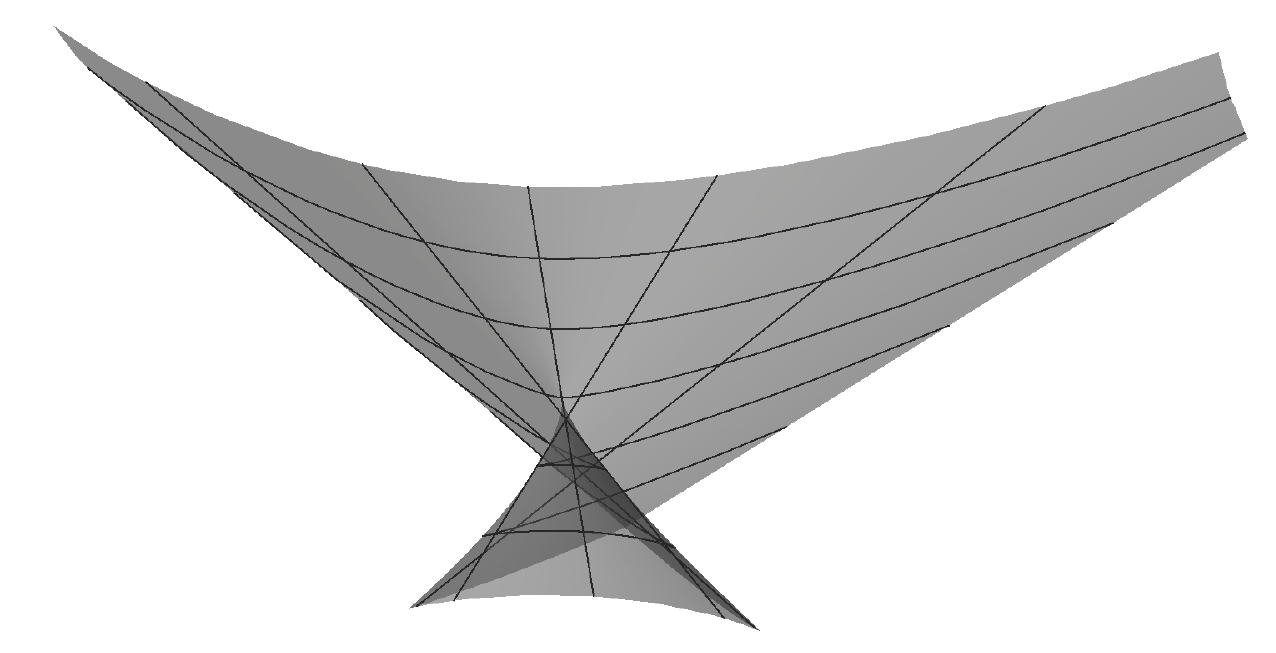}
\vspace{0cm}
\hspace{-1.1cm}
\end{center}
\end{minipage}
\hspace{-0.5cm}
\begin{minipage}{0.4\hsize}
\begin{center}
\vspace{-1.8cm}
\includegraphics[clip,scale=0.30,bb=0 0 455 449]{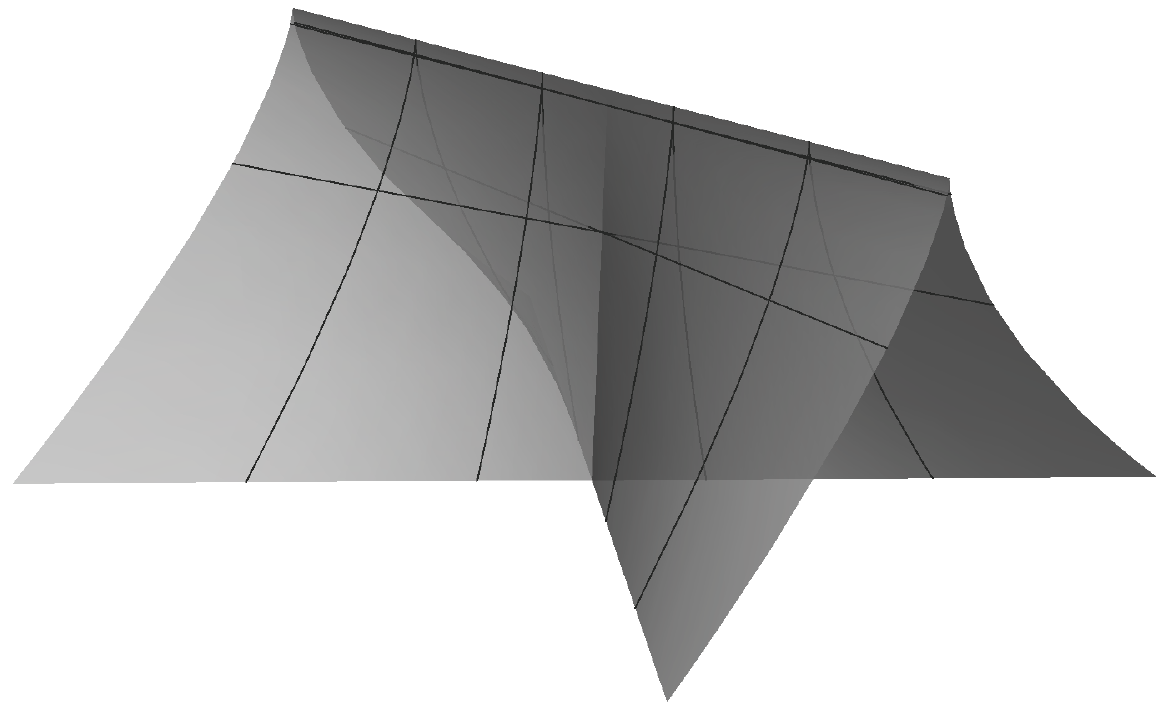}
\vspace{0.3cm}
\end{center}
\end{minipage}

\end{tabular}
\end{center}
\vspace{-0.2cm}
\caption{The cuspidal edge, swallowtail and cuspidal cross cap.}\label{Fig3}

\end{figure}

\subsection{Singular points on minfaces}
In \cite{FSUY}, Fujimori, Saji, Umehara and Yamada proved that the singular points of spacelike maximal surfaces in $\mathbb{L}^3$ generically consist of cuspidal edges, swallowtails and cuspidal cross caps. Similarly, these singular points frequently appear on timelike minimal surfaces. By using Facts as mentioned above, Takahashi gave the following criteria for cuspidal edges, swallowtails and cuspidal cross caps of minfaces by using their real Weierstrass data $(g_1, g_2,\omega_1, \omega_2)$. Now, we identify the Lorentz-Minkowski space $\mathbb{L}^3$ with the affine space $\mathbb{R}^3$.

\begin{fact}[\cite{T}]\label{Fact: T-lemma}
Let $f:U \longrightarrow \mathbb{L}^3$ be a minface and $p\in U$ a singular point. If we take the real Weierstrass data $(g_1, g_2,\hat{\omega}_1du, \hat{\omega}_2dv)$ on $U$, then $f$ is $\mathcal{A}$-equivalent to 
\begin{itemize}
\item[(i)] a cuspidal edge at $p$ if and only if $$\frac{g_1'}{g_1^2\hat{\omega}_1}-\frac{g_2'}{g_2^2\hat{\omega}_2}\neq 0\text{ and }\frac{g_1'}{g_1^2\hat{\omega}_1}+\frac{g_2'}{g_2^2\hat{\omega}_2}\neq0\text{ at } p,$$
\item[(ii)] a swallowtail at $p$ if and only if $$\frac{g_1'}{g_1^2\hat{\omega}_1}-\frac{g_2'}{g_2^2\hat{\omega}_2}\neq 0,\ \frac{g_1'}{g_1^2\hat{\omega}_1}+\frac{g_2'}{g_2^2\hat{\omega}_2}=0,\ \text{and}\ \left(\frac{g_1'}{g_1^2\hat{\omega}_1}\right)'\frac{g_2'}{g_2}-\left(\frac{g_2'}{g_2^2\hat{\omega}_2}\right)'\frac{g_1'}{g_1}\neq 0\text{ at }p,$$

\item[(iii)] a cuspidal cross cap at $p$ if and only if $$\frac{g_1'}{g_1^2\hat{\omega}_1}-\frac{g_2'}{g_2^2\hat{\omega}_2}=0,\ \frac{g_1'}{g_1^2\hat{\omega}_1}+\frac{g_2'}{g_2^2\hat{\omega}_2}\neq0,\ \text{and}\ \left(\frac{g_1'}{g_1^2\hat{\omega}_1}\right)'\frac{g_2'}{g_2}+\left(\frac{g_2'}{g_2^2\hat{\omega}_2}\right)'\frac{g_1'}{g_1}\neq0\text{ at }p.$$

\end{itemize}
\end{fact}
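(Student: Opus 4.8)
The plan is to reduce each of the three clauses to the standard recognition theorems for singularities of frontals and fronts, applied to the Euclidean frontal structure described in Section~\ref{Sec.4} (cuspidal edges and swallowtails as in \cite{SUY09}, cuspidal cross caps as in \cite{FSUY}), and then to translate every geometric condition into the real Weierstrass data $(g_1,g_2,\hat{\omega}_1 du,\hat{\omega}_2 dv)$.

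First I would set up the frame. From (\ref{eq:p-Weierstrass_real1}),
\[
f_u=\frac{\hat{\omega}_1}{2}(-1-g_1^2,\,1-g_1^2,\,2g_1),\qquad f_v=\frac{\hat{\omega}_2}{2}(1+g_2^2,\,1-g_2^2,\,-2g_2)
\]
are null, and since two null vectors are proportional exactly when they are $\langle\ ,\ \rangle$-orthogonal, $f_u$ and $f_v$ are linearly dependent precisely on the singular set $\{g_1g_2=1\}$. A direct computation gives $f_u\times_E f_v=\frac{\hat{\omega}_1\hat{\omega}_2}{2}(1-g_1g_2)\,w$ with $w=(-(g_1+g_2),\,g_1-g_2,\,-(1+g_1g_2))$, and $w$ is nowhere zero; hence $n=w/|w|$ is a smooth unit normal (so $f$ is a frontal, as in \cite{T}), with signed area density $\lambda=\frac{\hat{\omega}_1\hat{\omega}_2}{2}(1-g_1g_2)\,|w|$, where $|w|$ is the Euclidean norm. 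Since the factor multiplying $(1-g_1g_2)$ is nowhere zero, $\lambda$ and $1-g_1g_2$ have the same zero set, the singular curve has velocity $\gamma'\propto(-g_1g_2',\,g_1'g_2)$, and solving $\eta_u f_u+\eta_v f_v=0$ on $\{g_1g_2=1\}$ yields the null direction $\eta=(\hat{\omega}_2,\,\hat{\omega}_1 g_1^2)$.

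Next I would compute the invariants appearing in the recognition theorems, deferring the substitution $g_1g_2=1$ until the very end. A determinant computation gives
\[
\det(\gamma',\eta)=-g_1\hat{\omega}_1\hat{\omega}_2\left(\frac{g_1'}{g_1^2\hat{\omega}_1}+\frac{g_2'}{g_2^2\hat{\omega}_2}\right),
\]
so $\eta$ is transversal to $\gamma'$ iff $\frac{g_1'}{g_1^2\hat{\omega}_1}+\frac{g_2'}{g_2^2\hat{\omega}_2}\neq0$. For the front property, the Legendrian lift $(f,n)$ is an immersion iff the directional derivative of $w$ along $\eta$ is not proportional to $w$; computing this derivative and one component of its Euclidean cross product with $w$ (again using $g_1g_2=1$) produces a nonzero multiple of $\frac{g_1'}{g_1^2\hat{\omega}_1}-\frac{g_2'}{g_2^2\hat{\omega}_2}$, so that $f$ is a front at $p$ iff this difference is nonzero, which recovers Fact~\ref{Lemma T-2}. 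Finally, since $\frac{g_1'}{g_1^2\hat{\omega}_1}$ depends only on $u$ and $\frac{g_2'}{g_2^2\hat{\omega}_2}$ only on $v$, differentiating along $\gamma$ with $\gamma'\propto(-g_1g_2',g_1'g_2)$ and using $g_1g_2=1$ turns the derivative of the sum into the combination $\left(\frac{g_1'}{g_1^2\hat{\omega}_1}\right)'\frac{g_2'}{g_2}-\left(\frac{g_2'}{g_2^2\hat{\omega}_2}\right)'\frac{g_1'}{g_1}$ and the derivative of the difference into the same combination with a plus sign.

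With these identities each case is immediate. A cuspidal edge requires $f$ to be a front and $\eta$ transversal to $\gamma'$, giving the two conditions in (i). A swallowtail requires $f$ to be a front, $\det(\gamma',\eta)=0$, and $\frac{d}{dt}\det(\gamma',\eta)\neq0$ along $\gamma$; by the product rule this last derivative is, at the swallowtail point, a nonzero multiple of the derivative of the sum, namely the minus combination above, giving (ii). A cuspidal cross cap requires $\eta$ transversal to $\gamma'$, that $f$ fail to be a front at $p$, and that the front-ness function vanish to first order along $\gamma$; that last condition is the derivative of the difference, namely the plus combination, giving (iii). In each case the stated hypotheses force $(g_1',g_2')\neq(0,0)$, hence $d\lambda\neq0$, so the singular point is non-degenerate and the recognition theorems apply. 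The hard part is purely the bookkeeping: correctly matching each clause of the recognition theorems to its geometric quantity---transversality, front-ness, and the two first-order nondegeneracies---and carrying the determinant and normal-derivative computations through while postponing $g_1g_2=1$, so that the two combinations in (ii) and (iii) come out with exactly the stated signs.
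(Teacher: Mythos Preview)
Your proposal is correct and follows essentially the same route as the paper: both apply the recognition criteria of \cite{KRSUY} (cuspidal edge, swallowtail) and \cite{FSUY} (cuspidal cross cap) after writing the singular direction $\gamma'$, the null direction $\eta$, and the front condition explicitly in terms of the real Weierstrass data on the locus $g_1g_2=1$. The only differences are cosmetic---the paper normalizes $\gamma'=\frac{g_2'}{g_2}\partial_u-\frac{g_1'}{g_1}\partial_v$ and $\eta=\frac{1}{g_1\hat\omega_1}\partial_u+\frac{1}{g_2\hat\omega_2}\partial_v$, obtains the front condition by locating the kernel direction $\mu$ of $dn_p$ rather than via $w\times_E D_\eta w$, and for part~(iii) computes the FSUY invariant directly as $\det\bigl(df(\gamma'),n,dn(\eta)\bigr)=\alpha\bigl(\frac{g_1'}{g_1^2\hat\omega_1}-\frac{g_2'}{g_2^2\hat\omega_2}\bigr)$ with $\alpha\neq0$, which is precisely the identity your ``front\nobreakdash-ness function'' argument for (iii) is implicitly invoking.
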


To prove Fact \ref{Fact: T-lemma}, Takahashi used the following fact. We shall recall the proof in \cite{T} which will be helpful to prove our main results. 
\begin{fact}[\cite{T}]\label{Lemma T-2}
Let $f:U \longrightarrow \mathbb{L}^3$ be a minface with the real Weierstrass data $(g_1, g_2,\hat{\omega}_1du, \hat{\omega}_2dv)$. Then
\begin{itemize}
\item[(i)]  a point $p$ is a singular point of $f$ if and only if $g_1(p)g_2(p)=1$.
\item[(ii)] $f$ is a frontal at any singular point $p$.
\item[(iii)] $f$ is a front at a singular point $p$ if and only if $\frac{g_1'}{g_1^2\hat{\omega}_1}-\frac{g_2'}{g_2^2\hat{\omega}_2}\neq 0$ at $p$. Moreover in this case, $p$ is automatically a non-degenerate singular point.
\end{itemize}
\end{fact}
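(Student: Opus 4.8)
The plan is to read everything off the two null velocity vectors. Writing $f=(\varphi+\psi)/2$ as in (\ref{eq:p-Weierstrass_real1}), one has
\begin{equation*}
f_u=\tfrac{\hat\omega_1}{2}\,A,\qquad f_v=\tfrac{\hat\omega_2}{2}\,B,
\end{equation*}
with $A=(-1-g_1^2,\,1-g_1^2,\,2g_1)$ and $B=(1+g_2^2,\,1-g_2^2,\,-2g_2)$. For (i) I would compute the induced metric coefficient $\Lambda=\langle f_u,f_v\rangle$; a short calculation gives $\Lambda=\tfrac{\hat\omega_1\hat\omega_2}{2}(1-g_1g_2)^2$. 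Since $A,B$ are null, $f_u$ and $f_v$ are linearly dependent exactly when they are orthogonal, i.e.\ when $\Lambda=0$; as $\hat\omega_1,\hat\omega_2\neq0$ this happens iff $g_1g_2=1$, which is (i).

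For (ii) the key object is the Euclidean cross product. A direct computation yields
\begin{equation*}
f_u\times_E f_v=-\tfrac{\hat\omega_1\hat\omega_2}{2}(1-g_1g_2)\,\tilde n,\qquad \tilde n:=(g_1+g_2,\ g_2-g_1,\ 1+g_1g_2).
\end{equation*}
The point is that after cancelling the scalar factor $(1-g_1g_2)$ that produces the singular set, the vector $\tilde n$ remains, and $|\tilde n|_E^2=2g_1^2+2g_2^2+(1+g_1g_2)^2\geq 1>0$ everywhere. One checks the polynomial identities $\langle\tilde n,A\rangle_E=\langle\tilde n,B\rangle_E=0$, so $n:=\tilde n/|\tilde n|_E$ is a smooth Euclidean unit normal perpendicular to $df(TU)$ across the whole domain, singular points included. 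Hence $f$ is a frontal.

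For (iii) I would use that a frontal is a front at $p$ iff its Legendrian lift $(f,n)$ is an immersion there, which, since $\ker df_p=\langle\eta\rangle$ is one-dimensional (because $f_u,f_v\neq0$ but are dependent), is equivalent to $dn_p(\eta)\neq0$. Using $A=-g_1^2B$ at a singular point (a consequence of $g_1g_2=1$), the null direction is $\eta=\hat\omega_2\,\partial_u+\hat\omega_1 g_1^2\,\partial_v$. Because $n=\phi\,\tilde n$ with $\phi=1/|\tilde n|_E$ smooth and nonvanishing, $dn_p(\eta)=0$ iff $d\tilde n_p(\eta)$ is parallel to $\tilde n$. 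Now the trick: writing $e_-=(1,-1,g_2)$ and $e_+=(1,1,g_1)$, one has $\tilde n_u=g_1'e_-$, $\tilde n_v=g_2'e_+$, so $d\tilde n(\eta)=\alpha e_-+\beta e_+$ with $\alpha=\hat\omega_2 g_1'$, $\beta=\hat\omega_1 g_1^2 g_2'$, while at the singular point $\tilde n=g_1e_-+g_2e_+$. Since $e_-,e_+$ are independent, $d\tilde n(\eta)\parallel\tilde n$ iff the $2\times2$ determinant $\alpha g_2-\beta g_1$ vanishes; using $g_1g_2=1$ this factors as $g_1\hat\omega_1\hat\omega_2\bigl(\tfrac{g_1'}{g_1^2\hat\omega_1}-\tfrac{g_2'}{g_2^2\hat\omega_2}\bigr)$, which gives the stated criterion.

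Finally, for the ``moreover'' I would compute the signed area density $\lambda=\det(f_u,f_v,n)=\langle f_u\times_E f_v,n\rangle_E=-\tfrac{\hat\omega_1\hat\omega_2}{2}(1-g_1g_2)|\tilde n|_E$. Its only vanishing factor on the singular set is $(1-g_1g_2)$, so at $p$ one gets $d\lambda_p=-\tfrac{\hat\omega_1\hat\omega_2}{2}|\tilde n|_E\,d(1-g_1g_2)_p$ with $d(1-g_1g_2)=-g_1'g_2\,du-g_1g_2'\,dv$. The front condition forces $g_1'$ or $g_2'$ to be nonzero at $p$ (otherwise the criterion in (iii) would vanish), so $d\lambda_p\neq0$ and $p$ is non-degenerate. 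The main obstacle is the computation in (iii): recognizing that $\tilde n$, $d\tilde n(\eta)$, and the frame $e_\pm$ all lie in one $2$-plane turns the ``parallel'' condition into a single $2\times2$ determinant, after which the substitutions coming from $g_1g_2=1$ must be handled carefully to match the precise expression $\tfrac{g_1'}{g_1^2\hat\omega_1}-\tfrac{g_2'}{g_2^2\hat\omega_2}$.
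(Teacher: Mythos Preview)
Your argument is correct and follows essentially the same route as the paper: compute $f_u\times_E f_v$, extract the smooth nonvanishing normal direction $\tilde n$ after cancelling the factor $(1-g_1g_2)$, identify the null direction $\eta$, and read off both the front criterion and $d\lambda_p$ from the Weierstrass data. The only differences are tactical---your use of Lorentzian orthogonality of null vectors in (i) and of the frame $e_\pm$ in (iii) streamline computations that the paper carries out by writing $df_p$ and $dn_p$ explicitly at the singular point, finding the kernel $\mu$ of $dn_p$, and then testing the front condition via $\det(\eta,\mu)$.
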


\begin{proof}
Let $u$, $v$ be local coordinates on $U$. Since 
\begin{equation*}
f_u=\frac{\hat{\omega}_1}{2}(-1-g_1^2, 1-g_1^2, 2g_1),\quad f_v=\frac{\hat{\omega}_2}{2}(1+g_2^2, 1-g_2^2, -2g_2),
\end{equation*}
it holds that
\begin{equation*}
f_u\times f_v =\frac{\hat{\omega}_1\hat{\omega}_2}{2}(1-g_1g_2)(-g_1-g_2,g_1-g_2,-1-g_1g_2),
\end{equation*}
where $\times$ denotes the Euclidean outer product. Since $f$ is a minface, we obtain $\hat{\omega}_1\neq 0$ and $\hat{\omega}_2\neq 0$ at any point, and hence $p$ is a singular point if and only if $g_1(p)g_2(p)=1$. Moreover $f$ is a frontal with unit normal vector field
\begin{equation*}
n=\frac{1}{\sqrt{(1-g_1g_2)^2+2(g_1+g_2)^2}}(-g_1-g_2,g_1-g_2,-1-g_1g_2).
\end{equation*}

Next we prove (iii). Since $df_p$ and $dn_p$ are written as
\begin{equation*}
df_p=\frac{-g_1\omega_1+g_2\omega_2}{2}(g_1+g_2,g_1-g_2,-2),\quad dn_p=\frac{\left(-\frac{dg_1}{g_1}+\frac{dg_2}{g_2}\right)}{(g_1+g_2)\sqrt{2(g_1+g_2)^2}}(0,2,g_1-g_2),
\end{equation*}
where $\omega_1=\hat{\omega}_1du$ and $\omega_2=\hat{\omega}_2dv$, we obtain the following two vector fields $\eta$ and $\mu$ such that $df_p(\nu)=0$ and $dn_p(\mu)=0$:
\begin{equation}\label{eq:eta_nu}
\eta=\frac{1}{g_1\hat{\omega}_1}\left(\frac{\partial}{\partial u}\right)_p+\frac{1}{g_2\hat{\omega}_2}\left(\frac{\partial}{\partial v}\right)_p,\quad \mu=\frac{g_2'}{g_2}\left(\frac{\partial}{\partial u}\right)_p+\frac{g_1'}{g_1}\left(\frac{\partial}{\partial v}\right)_p.
\end{equation}
On the other hand, the minface $f$ is a front at $p$  if and only if the directions $\eta$ and $\mu$ are linearly independent (see, for example, proof of Lemma 3.3 in \cite{UY}) and by the equations (\ref{eq:eta_nu}) we get
\begin{equation*}
\det(\eta, \mu)=\frac{g_1'}{g_1^2\hat{\omega}_1}-\frac{g_2'}{g_2^2\hat{\omega}_2} {\text \ at\ }p, 
\end{equation*}
which proves the first part of the conclusion. Moreover, the signed area density function $\lambda$ can be written as 
\begin{equation*}
\lambda=-\frac{\hat{\omega}_1\hat{\omega}_2}{2}(1-g_1g_2)\sqrt{(1-g_1g_2)^2+2(g_1+g_2)^2},
\end{equation*}
and hence its derivative at $p$ can be written as follows:
\begin{equation}\label{eq:dlambda}
d\lambda_p=\frac{\hat{\omega}_1\hat{\omega}_2}{\sqrt{2}}|g_1+g_2|\left(\frac{dg_1}{g_1}+\frac{dg_2}{g_2}\right).
\end{equation}
Therefore a singular point $p$ is non-degenerate if and only if $g_1(p)\neq0$ or $g_2(p)\neq0$, and hence we have proved the desired result.
\end{proof}

\subsection{Behavior of the Gaussian curvature near singular points}
Now we are in the position to investigate the behavior of the Gaussian curvature near singular points of minfaces by using the facts given above. 

\begin{proof}[Proof of Theorem \ref{Maintheorem}]
We use the representation (\ref{eq:p-Weierstrass_real1}) for a minface $f$. Since $\varphi$ in (\ref{eq:p-Weierstrass_real1}) satisfies $\langle \varphi'', \varphi'' \rangle =4{g_1'}^2\hat{\omega}_1^2$ and $\hat{\omega}_1\neq 0$ on the minface $f$, $\varphi$ is degenerate at $p$ if and only if $g_1'(p)=0$. Similarly, $\psi$ is degenerate at $p$ if and only if $g_2'(p)=0$. By (i) of Fact \ref{Fact: T-lemma}, near a cuspidal edge $g_1'\neq 0$ or  $g_2'\neq 0$. Hence there is no umbilic point near $p$ by Proposition \ref{prop:flat points}. Next we prove (ii). If we assume that one of $g_1'(p)$ and $g_2'(p)$ vanishes, then the other one also vanishes by (i) of Fact \ref{Fact: T-lemma}. By (iii) of Fact \ref{Lemma T-2}, it contradicts  the assumption that $f$ is a front, that is, there is no flat point near $p$. By Proposition \ref{prop:flat points} and Lemma \ref{lemma:non-deg}, we can take pseudo-arclength parameters of $\varphi$ and $\psi$, that is, 
\begin{equation}\label{eq: pseudo-arc}
g_1'\hat{\omega}_1(u)=-\frac{\varepsilon_\varphi}{2}, \quad g_2'\hat{\omega}_2(v)=-\frac{\varepsilon_\psi}{2},
\end{equation} 
and hence 
\begin{equation}\label{eq: two discriminants}
\frac{g_1'}{g_1^2\hat{\omega}_1}-\frac{g_2'}{g_2^2\hat{\omega}_2}=-\frac{\varepsilon_\varphi}{2g_1^2{\hat{\omega}_1}^2}+\frac{\varepsilon_\psi}{2g_2^2{\hat{\omega}_2}^2} \text{\ and\ }  \frac{g_1'}{g_1^2\hat{\omega}_1}+\frac{g_2'}{g_2^2\hat{\omega}_2}=-\left(\frac{\varepsilon_\varphi}{2g_1^2{\hat{\omega}_1}^2}+\frac{\varepsilon_\psi}{2g_2^2{\hat{\omega}_2}^2}\right).
\end{equation}
Since $f$ is a front at $p$, the quantity $\frac{g_1'}{g_1^2\hat{\omega}_1}-\frac{g_2'}{g_2^2\hat{\omega}_2}$ does not vanish at $p$ by (iii) of Fact \ref{Lemma T-2}. On the other hand, if we assume that the singular point $p$ is not a cuspidal edge, then the quantity $\frac{g_1'}{g_1^2\hat{\omega}_1}+\frac{g_2'}{g_2^2\hat{\omega}_2}$ vanishes at $p$ by (i) of Fact \ref{Fact: T-lemma}. Therefore by the second equation of (\ref{eq: two discriminants}), the orientations of $\varphi$ and $\psi$ are different. Hence, by Theorem \ref{theorem:p-Weierstrass_real}, the Gaussian curvature $K$ is negative and $K$ diverges to $-\infty$ at $p$. Finally if we assume that $f$ is not a front at $p$ and $p$ is a non-degenerate singular point, then the quantity $\frac{g_1'}{g_1^2\hat{\omega}_1}-\frac{g_2'}{g_2^2\hat{\omega}_2}$ vanishes at $p$. Hence, if one of $g_1'(p)$ and $g_2'(p)$ vanishes, then the other one also vanishes, which contradicts the assumption that $p$ is non-degenerate and the equation (\ref{eq:dlambda}). Therefore, there is no flat point near $p$. By taking pseudo-arclength parameters of $\varphi$ and $\psi$ with (\ref{eq: pseudo-arc}) again and considering the first equation of (\ref{eq: two discriminants}), we conclude that the orientations of $\varphi$ and $\psi$ are the same. By Theorem \ref{theorem:p-Weierstrass_real}, the Gaussian curvature $K$ is positive and $K$ diverges to $\infty$ at $p$, which completes the proof.
\end{proof}

\begin{remark}\label{sign of CE}
In general, the sign of the Gaussian curvature near cuspidal edges of a minface cannot be determined. If we take the real Weierstrass data
\begin{center}
$g_1(u)=u$, $g_2(v)=1+v^2$, $\omega_1(u)=du$ and $\omega_2(v)=dv$, 
\end{center}
in the equation (\ref{eq:p-Weierstrass_real1}),
then the singular set $\Sigma_f$ is determined by the equation $g_1(u)g_2(v)=u(1+v^2)=1$, that is, $\Sigma_f=\{(\frac{1}{1+v^2},v)\in \mathbb{R}^2\mid v \in \mathbb{R}\}$ and quantities in (i) of Fact \ref{Fact: T-lemma} are computed as follows
\begin{center}
$\frac{g_1'}{g_1^2\hat{\omega}_1}\pm \frac{g_2'}{g_2^2\hat{\omega}_2}=\frac{1}{u^2}\pm\frac{2v}{(1+v^2)^2}=\frac{(1+v^2)^4\pm2v}{(1+v^2)^2}=\frac{(1\pm v)^2+3v^2+6v^4+4v^6+v^8}{(1+v^2)^2}>0$ on  $\Sigma_f$.
\end{center}
Therefore, the set of singular points $\Sigma_f$ consists of only cuspidal edges. On the other hand, the Gaussian curvature is $K(u,v)=\frac{2v}{(u(1+v^2)-1)^4}$. Hence the sign of the Gaussian curvature cannot be determined near cuspidal edges in general. Moreover, the Gaussian curvature of this example does not diverge along the quasi-umbilic curve $v=0$ (the curve appears as the boundary of black and gray parts in Figure \ref{Fig4}).
\begin{figure}[htbp]
 \vspace{-0.45cm}
\begin{center}
\includegraphics[clip,scale=0.30,bb=0 0 310 300]{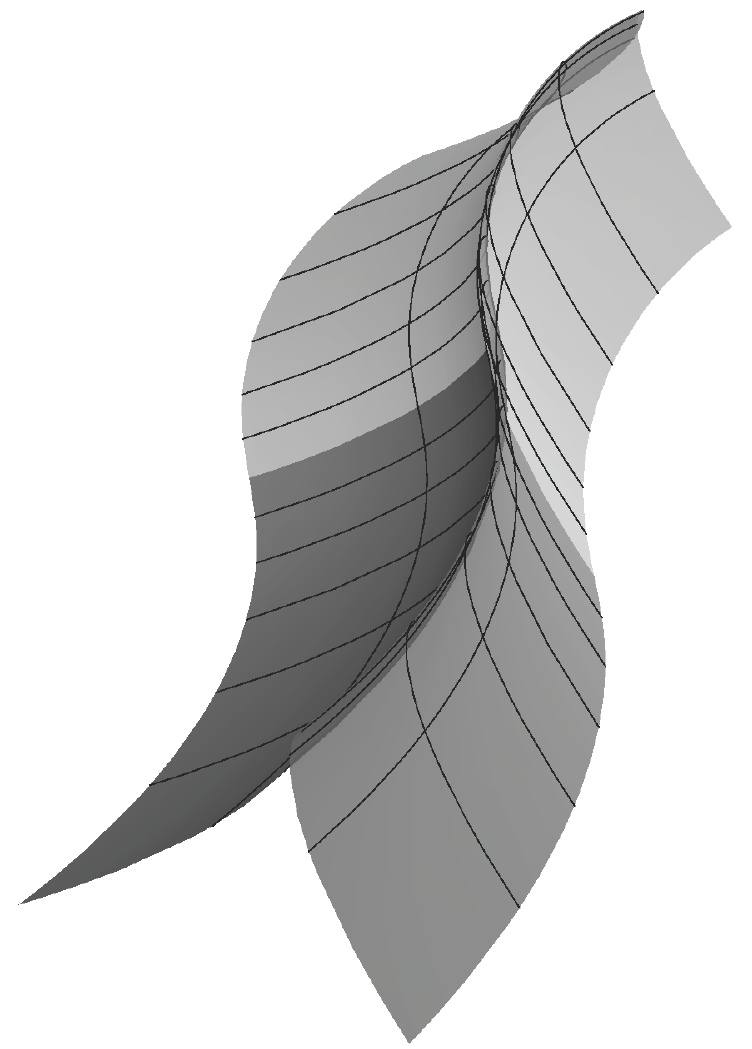}
 \vspace{0cm}
\caption{A minface with cuspidal edges on which the sign of the Gaussian curvature changes along a quasi-umbilic curve.}\label{Fig4}
\end{center}
\end{figure}
\end{remark}

\begin{remark}\label{maxfaces}In \cite{UY}, Umehara and Yamada introduced the notion of maxfaces in $\mathbb{L}^3$, and proved that any maxface $f$ is locally represented as 
\begin{equation*}
f(z)=\Re\int^z_{z_0}(-2G,1+G^2, i(1-G^2))\eta,
\end{equation*}
where $(G, \eta)$ is a pair of a meromorphic function and a holomorphic $1$-form on a simply connected domain in $\mathbb{C}$ containing a base point $z_0$ such that $(1+|G|^2)^2|\eta|^2\neq 0$ on the domain. Moreover, the first fundamental form of $f$ is given by $\mathrm{I}=(1-|G|^2)^2|\eta|^2$, and hence a point $z$ is a singular point of $f$ if and only if $|G(z)|=1$.  
By using $(G, \eta)$, the intrinsic Gaussian curvature $K$ of $f$ can be written as
\begin{equation}\label{eq:Gauss curvature on maxfaces}
K = \cfrac{4|dG|^2}{(1-|G|^2)^4|\eta|^2},
\end{equation}
where the non-degeneracy of a singular point $p$ means $dG_p\neq 0$ (Lemma 3.3 in \cite{UY}). Therefore at a non-degenerate singular point $p$ of any maxface, the Gaussian curvature $K$ always diverges to $\infty$.
\end{remark}

\if0
On the other hand, as we saw in Remark \ref{sign of CE}, the sign of the Gaussian curvature near cuspidal edges of a minface is not always determined. However, Theorem \ref{Maintheorem} also state that if cuspidal edges on a minface accumulate to another non-degenerate singular point of a frontal, the Gaussian curvature behaves as same as near this accumulated singular point (see, Example \ref{Lorentzian Enneper} bellow). In summary, we have obtained the following corollary:\begin{corollary}\label{Cor: CE-S-CCR}
Let $f: \Sigma \longrightarrow \mathbb{L}^3$ be a minface and $p \in \Sigma$ a singular point of $f$. 
\begin{itemize}
\item[(i)] If $p$ is a swallowtail, then there is no umbilic and quasi-umbilic point near $p$. Moreover the sign of the Gaussian curvature $K$ is negative and $\displaystyle \lim_{q \to p} K(q)=-\infty$.
\item[(ii)] If $p$ is a cuspidal cross cap at $p$, then there is no umbilic and quasi-umbilic point near $p$. Moreover the sign of the Gaussian curvature $K$ is positive and $\displaystyle \lim_{q \to p} K(q)=\infty$.
\item[(iii)] If cuspidal edges accumulate to another non-degenerate singular point $p$, then the Gaussian curvature behaves as either (ii) or (iii) in Theorem \ref{Maintheorem} depending on $f$ is a front or not at $p$.
\end{itemize}
\end{corollary}
\fi

\begin{example}[\cite{IT, Konderak,T}]\label{Lorentzian Enneper}
If we take the real Weierstrass data
\begin{center}
$g_1(u)=u$, $g_2(v)=-v$, $\omega_1(u)=\frac{1}{2}du$ and $\omega_2(v)=\frac{1}{2}dv$
\end{center}
in the equation (\ref{eq:p-Weierstrass_real1}), we obtain the following two null curves
\begin{equation*}
\varphi(u)=\frac{1}{2}(-u-\frac{u^3}{3},u-\frac{u^3}{3},u^2) \text{ and } \psi(v)=\frac{1}{2}(v+\frac{v^3}{3},v-\frac{v^3}{3},v^2).
\end{equation*}
The surface obtained by these two null curves is called the {\it timelike Enneper surface of isothermic type} or an {\it analogue of Enneper's surface}. Since $2g_1'\hat{\omega}_1=1$ and $2g_2'\hat{\omega}_2=-1$, $\varphi$ and $\psi$ are parametrized by pseudo-arclength parameters and have negative and positive orientations, respectively. Hence, Theorem \ref{theorem:p-Weierstrass_real} states that the Gaussian curvature $K$ is negative. Moreover the singular set is $\Sigma_f=\{(u,v)\in \mathbb{R}^2\mid uv=-1\}$ and the quantities in Fact \ref{Fact: T-lemma} are computed as $\frac{g_1'}{g_1^2\hat{\omega}_1}\pm \frac{g_2'}{g_2^2\hat{\omega}_2}=2(v^2\pm u^2)$ on $\Sigma_f$. Therefore, $\Sigma_f$ consists of cuspidal edges $\Sigma_f \setminus \{(1,-1), (-1,1)\}$ and swallowtails $\{(1,-1), (-1,1)\}$. By (ii) of Fact \ref{duality} in Appendix \ref{Appendix:A}, the conjugate minface of the timelike Enneper surface of isothermic type $f^*$ defined by $(\ref{eq:conjugate})$ has cuspidal edges $\Sigma_f \setminus \{(1,-1), (-1,1)\}$ and cuspidal cross caps $\{(1,-1), (-1,1)\}$, see Figure \ref{Fig5}.
\vspace{+0.1cm}
\begin{figure}[!h]
\begin{center}
\begin{tabular}{c}
\hspace{+2.2cm}
\begin{minipage}{0.4\hsize}
\begin{center}
\vspace{-1.6cm}
\includegraphics[clip,scale=0.28,bb=0 0 500 509]{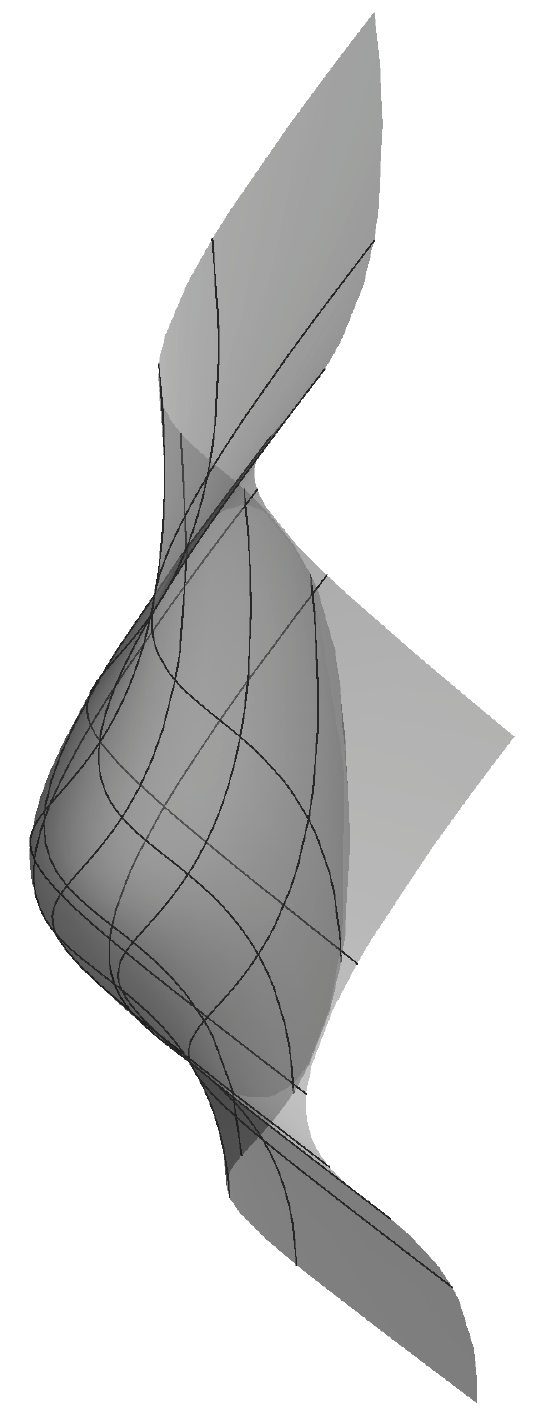}
\vspace{0.5cm}
\end{center}
\end{minipage}
\begin{minipage}{0.4\hsize}
\begin{center}
\vspace{-1.1cm}
\includegraphics[clip,scale=0.27,bb=0 0 555 449]{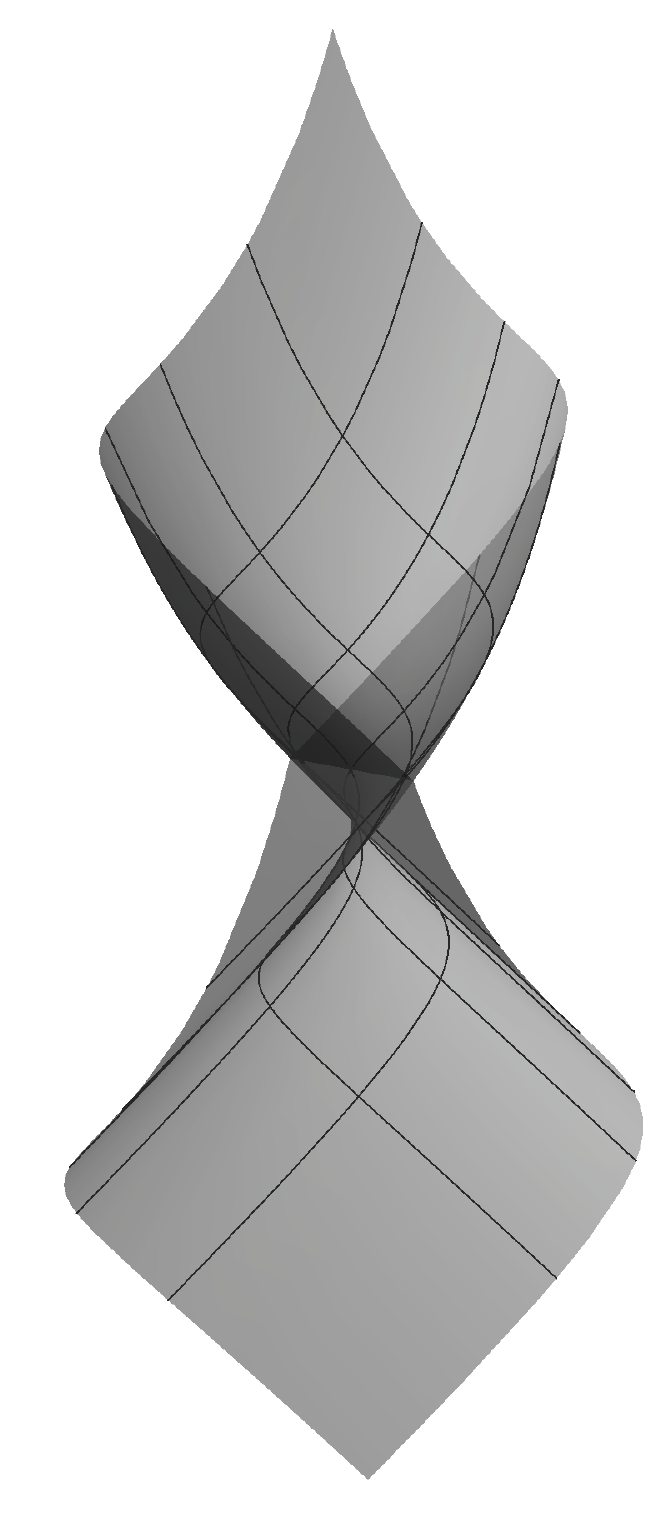}
\vspace{0.1cm}
\end{center}
\end{minipage}

\end{tabular}
\end{center}
\vspace{-0.5cm}
\caption{The timelike Enneper surface of isothermic-type and its conjugate.}\label{Fig5}

\end{figure}

\end{example}
As Example \ref{ex:K-change} umbilic points and quasi-umbilic points on a timelike minimal surface are not isolated in general. We also saw an example in Remark \ref{sign of CE} where a curve of quasi-umbilic points on a minface accumulates to a cuspidal edge. As a corollary of Theorem \ref{Maintheorem}, we obtain the following:

\begin{corollary}\label{Cor: accumulate points}
The following (i) and (ii) hold:
\begin{itemize}
\item[(i)] Umbilic points do not accumulate to a non-degenerate singular point of a minface.
\item[(ii)] If quasi-umbilic points accumulate to a non-degenerate singular point $p$ of a minface, then $p$ is a cuspidal edge.
\end{itemize}
\end{corollary}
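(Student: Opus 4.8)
The plan is to obtain both statements directly from Theorem \ref{Maintheorem}; the only thing to check is that its three cases exhaust all non-degenerate singular points. So I would first fix a non-degenerate singular point $p$ and split according to whether $f$ is a front at $p$. If $f$ is a front at $p$, then $p$ is either a cuspidal edge, handled by Theorem \ref{Maintheorem}(i), or it is not, handled by Theorem \ref{Maintheorem}(ii). If $f$ is not a front at $p$, then, since $p$ is assumed non-degenerate, we are exactly in the hypothesis of Theorem \ref{Maintheorem}(iii). Hence every non-degenerate singular point falls into one of the three cases.

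For (i), I would use that each of the three cases of Theorem \ref{Maintheorem} asserts, in particular, the absence of umbilic points in a neighborhood of $p$. Therefore some neighborhood of any non-degenerate singular point $p$ contains no umbilic point, and since $p$ itself is singular (hence not a regular umbilic point), $p$ cannot be an accumulation point of the umbilic set. This proves (i).

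For (ii), I would argue by contraposition. Suppose $p$ is a non-degenerate singular point which is not a cuspidal edge; then $p$ lies in case (ii) or (iii) of Theorem \ref{Maintheorem}, both of which guarantee that there are no quasi-umbilic points near $p$. Consequently quasi-umbilic points cannot accumulate to such a $p$. Equivalently, if quasi-umbilic points do accumulate to a non-degenerate singular point $p$, then $p$ must be a cuspidal edge, which is (ii). I note that this is consistent with Remark \ref{sign of CE}, where a curve of quasi-umbilic points indeed accumulates to a cuspidal edge.

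I do not expect any analytic difficulty, since all the substantive work is already contained in Theorem \ref{Maintheorem}. The one step deserving a little care is the case-exhaustion at the start, which relies on the front criterion in Fact \ref{Lemma T-2}(iii) and on the observation that cuspidal edges occur only at front-type singular points; getting this bookkeeping right is what makes the three cases genuinely cover every non-degenerate singular point.
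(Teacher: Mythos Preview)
Your argument is correct and essentially matches the paper's proof: both reduce (ii) to the absence of quasi-umbilic points in cases (ii)/(iii) of Theorem \ref{Maintheorem}, and your case-exhaustion via Fact \ref{Lemma T-2}(iii) is exactly the right bookkeeping. The only difference is cosmetic: for (i) the paper appeals directly to equation (\ref{eq:dlambda}) and Proposition \ref{prop:flat points}(i) rather than invoking Theorem \ref{Maintheorem}, but since Theorem \ref{Maintheorem} already packages that conclusion, your route is equally valid.
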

\begin{proof}
The claim (i) follows from the equation(\ref{eq:dlambda}) and (i) of Proposition \ref{prop:flat points}. To prove the claim (ii), if we assume that quasi-umbilic points accumulate to a non-degenerate singular point $p$ which is not a cuspidal edge, then the condition $K(p)=0$ contradicts to (ii) or (iii) of Theorem \ref{Maintheorem}.
\end{proof}
\begin{remark}
In contrast with the corollary as above, umbilic points on a maxface do not accumulate to a non-degenerate singular point by the equation (\ref{eq:Gauss curvature on maxfaces}) and the non-degeneracy of a singular point in Remark \ref{maxfaces}.
\end{remark}

In the end of this section, we give a criterion for the sign of the Gaussian curvature near cuspidal edges on minfaces. Let $f:U \longrightarrow \mathbb{R}^3$ be a front with the unit normal vector field $n$ and $\gamma=\gamma(t)$ a singular curve on $U$ consists of cuspidal edges. By (i) of Fact \ref{Fact: KRSUY}, $\gamma$ is a regular curve and we can take the null vector fields $\eta$ such that $(\gamma'(t), \eta(t))$ is positively oriented with respect to the orientation of $U$.
The {\it singular curvature} $\kappa_s$ of the cuspidal edge $\gamma$ was defined in \cite{SUY09} as
\begin{equation*}
\kappa_s(t)=\mathrm{sgn}(d\lambda(\eta))\frac{\det(\hat{\gamma}'(t),\hat{\gamma}''(t),n)}{|\hat{\gamma}'(t)|^3},
\end{equation*}
where $\hat{\gamma}=f\circ \gamma$ and $|\hat{\gamma}'(t)|=\langle\hat{\gamma}'(t),\hat{\gamma}'(t) \rangle_E^{1/2}$. The singular curvature is an intrinsic invariant of cuspidal edges, and related to the behavior of the Gaussian curvature as stated in Introduction. For minfaces, the singular curvature characterizes the sign of the Gaussian curvature near cuspidal edges:

\begin{theorem}\label{Theorem: Gaussian curvature near cusps}
Let $f:U \longrightarrow \mathbb{L}^3$ be a minface with the real Weierstrass data $(g_1, g_2,\hat{\omega}_1du, \hat{\omega}_2dv)$ and $\gamma(t)$ the singular curve passing through a cuspidal edge $p=\gamma(0)$. Then the Gaussian curvature $K$ and the singular curvature $\kappa_s$ have the same sign. In particular, zeros of $\kappa_s$ correspond to either zeros of $g_1'$ or $g_2'$. 
\end{theorem}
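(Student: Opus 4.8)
The plan is to show that both $\mathrm{sgn}(K)$ (at regular points near the edge) and $\mathrm{sgn}(\kappa_s)$ (along the edge) reduce to the \emph{same} expression in the real Weierstrass data, namely $\mathrm{sgn}(\hat\omega_1\hat\omega_2\,g_1'g_2')$. For $K$ this is quick: off the quasi-umbilic locus $\{g_1'=0\}\cup\{g_2'=0\}$ the map is a non-flat immersion, so Theorem \ref{theorem:p-Weierstrass_real} and formula (\ref{eq:Gauss}) give $\mathrm{sgn}(K)=\varepsilon_\varphi\varepsilon_\psi$. Since the pseudo-arclength normalization reads $g_1'\hat\omega_1=-\varepsilon_\varphi/2$ and the sign of $g_1'\hat\omega_1$ is invariant under reparametrization of $\varphi$, one has $\varepsilon_\varphi=-\mathrm{sgn}(g_1'\hat\omega_1)$ and likewise $\varepsilon_\psi=-\mathrm{sgn}(g_2'\hat\omega_2)$, hence $\mathrm{sgn}(K)=\mathrm{sgn}(\hat\omega_1\hat\omega_2\,g_1'g_2')$. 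It remains to compute $\mathrm{sgn}(\kappa_s)$.

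For the singular curvature I would work from the representation $f=(\varphi+\psi)/2$, so that $f_{uv}\equiv0$, together with the expressions for $f_u$, $f_v$, the unit normal $n$, the area density $\lambda$ and the null direction $\eta_0=\frac{1}{g_1\hat\omega_1}\partial_u+\frac{1}{g_2\hat\omega_2}\partial_v$ recalled in the proof of Fact \ref{Lemma T-2}. The key structural fact is that along the singular set $g_1g_2=1$ one has $f_u\times f_v=0$, so $f_u$ and $f_v$ are parallel; writing $g:=g_1$, $g_2=1/g$, $T:=(g^2+1,g^2-1,-2g)$, a short computation gives $f_u=-\tfrac{\hat\omega_1}{2}T$ and $f_v=\tfrac{\hat\omega_2}{2g^2}T$. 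Parametrizing the singular curve and differentiating $g_1g_2=1$ shows $\gamma'=(u',v')=c(g^2g_2',-g_1')$ for some nowhere-zero $c$, so that $\hat\gamma':=(f\circ\gamma)'=\alpha T$ with $\alpha=-\tfrac{c}{2g^2}P$ and $P:=\hat\omega_1 g^4g_2'+\hat\omega_2 g_1'$; note that $P$ is a nonzero multiple of $\tfrac{g_1'}{g_1^2\hat\omega_1}+\tfrac{g_2'}{g_2^2\hat\omega_2}$.

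Next I would evaluate $\det(\hat\gamma',\hat\gamma'',n)$ along $\gamma$. Because $f_{uv}=0$ and every term of $\hat\gamma''$ lying in $\mathrm{span}\{f_u,f_v\}=\mathrm{span}\{T\}$ drops out against $\hat\gamma'=\alpha T$, only the vectors $w_1=(-g_1,-g_1,1)$ and $w_2=(g_2,-g_2,-1)$ arising from $f_{uu}$ and $f_{vv}$ contribute. The computation hinges on the identity $\det(T,w_1,n)=\det(T,w_2,n)=\mathrm{sgn}(g)\sqrt2\,(g^2+1)=:D$, which I would verify directly; it yields $\det(\hat\gamma',\hat\gamma'',n)=\alpha D\bigl(\hat\omega_1 g_1'(u')^2+\hat\omega_2 g_2'(v')^2\bigr)=-\tfrac{c^3}{2g^2}P^2g_1'g_2'D$. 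For the prefactor $\mathrm{sgn}(d\lambda(\eta))$ I would take $\eta=\epsilon\eta_0$ with $\epsilon=\mathrm{sgn}\det(\gamma',\eta_0)=\mathrm{sgn}\!\bigl(cP/(g\hat\omega_1\hat\omega_2)\bigr)$, so that $(\gamma',\eta)$ is positively oriented, and use that $d\lambda(\eta_0)$ is a nonzero multiple of $P$. The two occurrences of $\mathrm{sgn}(P)$ then cancel, as do the squares of $\mathrm{sgn}(c)$ and $\mathrm{sgn}(g)$, leaving $\mathrm{sgn}(\kappa_s)=\mathrm{sgn}(\hat\omega_1\hat\omega_2\,g_1'g_2')=\mathrm{sgn}(K)$. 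Finally, at a cuspidal edge Fact \ref{Fact: T-lemma}(i) forces $P\neq0$, so $\det(\hat\gamma',\hat\gamma'',n)$, and hence $\kappa_s$, vanishes exactly when $g_1'g_2'=0$, which gives the last assertion.

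The step I expect to be the main obstacle is the consistent bookkeeping of signs, and above all the orientation of the unit normal. Since $\kappa_s$ is independent of the choice of orientation of $n$, any fixed choice works, but one must use the \emph{same} normal in the triple product $\det(\hat\gamma',\hat\gamma'',n)$ and in $d\lambda(\eta)$; pairing the normal used for $n$ with a differently oriented normal implicit in the formula for $\lambda$ flips the final sign and would spuriously produce $\mathrm{sgn}(\kappa_s)=-\mathrm{sgn}(K)$. Keeping the normal, the orientation of $\eta$ relative to $\gamma'$, and the reparametrization sign $c$ all mutually consistent is therefore the crux of the argument; the determinant identity $\det(T,w_1,n)=\det(T,w_2,n)$, while a pleasant simplification, is routine once set up.
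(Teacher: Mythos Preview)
Your proposal is correct and follows essentially the same route as the paper: both arguments compute the singular curvature in terms of the real Weierstrass data via the ingredients $\hat\gamma'$, $\hat\gamma''$, $n$ and $d\lambda(\eta)$, and then compare the outcome with the explicit expression for $K$. The paper is more succinct, producing the closed formula
\[
\kappa_s=\frac{2g_1'g_2'}{\hat\omega_1\hat\omega_2(g_1+g_2)^2}\Big(\tfrac{g_1'}{g_1^2\hat\omega_1}+\tfrac{g_2'}{g_2^2\hat\omega_2}\Big)^{-1}
\]
and reading off the sign directly, together with the formula $K=4g_1'g_2'/\big(\hat\omega_1\hat\omega_2(1-g_1g_2)^4\big)$; you instead pass through the orientations $\varepsilon_\varphi,\varepsilon_\psi$ to get $\mathrm{sgn}(K)$ and track signs symbolically in $\kappa_s$. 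Your identification of the sign bookkeeping (same $n$ in $\lambda$ and in the triple product, correct orientation of $\eta$) as the only delicate point is exactly right; once that is handled consistently the two computations coincide.
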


\begin{proof}
By the proofs of Facts \ref{Fact: T-lemma} (see Appendix A) and \ref{Lemma T-2}, we can compute
\begin{align}
\det(\hat{\gamma}'(t),\hat{\gamma}''(t),n)=\hat{\omega}_1^2\hat{\omega}_2^2g_1'g_2'\sqrt{\frac{(g_1+g_2)^2}{2}} \left(\frac{g_1'}{g_1^2\hat{\omega}_1}+\frac{g_2'}{g_2^2\hat{\omega}_2}\right)^2, \nonumber \\
|\hat{\gamma}'|=\sqrt{\frac{\hat{\omega}_1^2\hat{\omega}_2^2(g_1+g_2)^2}{2}}\left(\frac{g_1'}{g_1^2\hat{\omega}_1}+\frac{g_2'}{g_2^2\hat{\omega}_2}\right)\quad \text{and}\quad \mathrm{sgn}(d\lambda(\eta))=\mathrm{sgn}(\hat{\omega}_1\hat{\omega}_2),\nonumber
\end{align}
where we take the null vector field $\eta$ satisfying the condition
\begin{equation*}
\det(\gamma',\eta)=\left(\frac{g_1'}{g_1^2\hat{\omega}_1}+\frac{g_2'}{g_2^2\hat{\omega}_2}\right) >0.
\end{equation*}
Therefore, the singular curvature $\kappa_s$ is written as 
\begin{equation}\label{eq:singular_curvature}
\kappa_s=\frac{2g_1'g_2'}{\hat{\omega}_1\hat{\omega}_2(g_1+g_2)^2}\frac{1}{\left(\frac{g_1'}{g_1^2\hat{\omega}_1}+\frac{g_2'}{g_2^2\hat{\omega}_2}\right)}.
\end{equation}
Hence, zeros of $\kappa_s$ correspond to either zeros of $g_1'$ or $g_2'$. On the other hand, the Gaussian curvature $K$ of the minface $f$ which is represented as (\ref{eq:p-Weierstrass_real1}) is written as
\begin{equation}\label{eq:Gaussian_curvature}
K=\frac{4g_1'g_2'}{\hat{\omega}_1\hat{\omega}_2(1-g_1g_2)^4}.
\end{equation}
By (\ref{eq:singular_curvature}) and (\ref{eq:Gaussian_curvature}), we obtain the desired result.
\end{proof}


  \renewcommand{\theequation}{A.\arabic{equation}}
\setcounter{equation}{0}
\appendix
\section{Geometry of minfaces}\label{Appendix:A}
In this appendix we give a precise description of the notion of minfaces and their representation formulas based on the work by Takahashi \cite{T}.

First we shall recall the notion of paracomplex algebra. For a more detailed exposition on paracomplex numbers, see \cite{CFG,IT,Konderak} and their references. Let $\mathbb{C}'$ be the $2$-dimensional commutative algebra of the form $\mathbb{C}'=\mathbb{R}1\oplus \mathbb{R} j$ with multiplication law:
\begin{equation*}
j\cdot 1=1\cdot j=j,\quad j^2=1.
\end{equation*}
An element of $\mathbb{C}'$ is called a {\it paracomplex number} and $\mathbb{C}'$ is called the {\it paracomplex algebra}. Some authors use the terminology {\it split-complex numbers} or {\it Lorentz numbers} instead of paracomplex numbers. For a paracomplex number $z=x+jy$, we call $\Re{z}:=x$, $\Im{z}:=y$ and $\bar{z}:=x-jy$ the real part, the imaginary part and the conjugate of $z$, respectively. The paracomplex algebra $\mathbb{C}'$ can be identified with the Minkowski plane $\mathbb{L}^2=(\mathbb{R}^2, \langle \ ,\ \rangle _{\mathbb{L}^2}=-dx^2+dy^2)$ as follows:
\begin{equation*}
\mathbb{C}' \ni z=x+jy \longleftrightarrow \bm{z}=(x, y)\in \mathbb{L}^2.
\end{equation*}
Under the identification, the scalar product $\langle \bm{z_1}, \bm{z_2}\rangle_{\mathbb{L}^2}$ of $\mathbb{L}^2$ can be written as $-\Re{(\bar{z}_1z_2)}$. In particular, $\langle \bm{z},\bm{z}\rangle _{\mathbb{L}^2}=-z\bar{z}$ and we define $\langle z \rangle^2:=z\bar{z}$. We also define the {\it $n$-dimensional paracomplex space} as ${\mathbb{C}'}^n:=\{(z^0,z^1,\cdots,z^{n-1})\mid z^0, z^1,\cdots, z^{n-1}\in \mathbb{C}'\}$. 

A $(1,1)$-tensor field $J$ on a 2-dimensional oriented manifold $\Sigma$ is called an {\it almost paracomplex structure} if $J$ satisfies $J^2=\mathrm{id}$ and $\mathrm {dim}(V_{-})=\mathrm{dim}(V_{+})=1$, where $V_-$ and $V_+$ are $\pm 1$-eigenspaces for $J$. As pointed out in \cite{IT,W} every almost complex structure $J$ on $\Sigma$ is integrable, that is, there exists a coordinate system $(u,v)$ compatible with the orientation of $\Sigma$ such that $J(\frac{\partial}{\partial u})=\frac{\partial}{\partial u}$ and $J(\frac{\partial}{\partial v})=-\frac{\partial}{\partial v}$ near each point. We also call $(u,v)$ a {\it null coordinate system}, $(x=\frac{u-v}{2},y=\frac{u+v}{2})$ a {\it Lorentz isothermal coordinate system} on $(\Sigma, J)$ and $(\Sigma, J)$ a {\it para-Riemann surface}. 

A smooth map $\varphi$ between para-Riemann surfaces $(M, J)$ and $(N,J')$ is called {\it paraholomorphic} if $d\varphi \circ J=J'\circ d\varphi$. Paraholomorphicity of maps locally can be characterized as follows:
\begin{fact}[\cite{T}]\label{fact:paraholomorphicity}
Let $D\subset \mathbb{C}'$ be a domain with a coordinate $z=x+jy=\frac{u-v}{2}+j\frac{u+v}{2}$. A function $\varphi=\varphi^1+j\varphi^2$ is paraholomorphic if and only if there exist functions $f=f(u)$ and $g=g(v)$ such that $\varphi(z)=\frac{f(u)+g(v)}{2}+j\frac{f(u)-g(v)}{2}$.
\end{fact}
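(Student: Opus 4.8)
The plan is to characterize paraholomorphicity by a paracomplex analogue of the Cauchy--Riemann equations and then integrate them directly. First I would unwind the definition $d\varphi\circ J=J'\circ d\varphi$ in the null coordinate system $(u,v)$ associated to $z=x+jy=\frac{u-v}{2}+j\frac{u+v}{2}$, where the integrable almost paracomplex structure $J$ on $D$ satisfies $J(\partial/\partial u)=\partial/\partial u$ and $J(\partial/\partial v)=-\partial/\partial v$, and where the structure $J'$ on the target $\mathbb{C}'$ is multiplication by $j$. Writing $\varphi=\varphi^1+j\varphi^2$ and using $j(\varphi^1_u+j\varphi^2_u)=\varphi^2_u+j\varphi^1_u$, the commuting condition applied to $\partial/\partial u$ and $\partial/\partial v$ yields the two \emph{para-Cauchy--Riemann equations}
\begin{equation*}
\varphi^1_u=\varphi^2_u,\qquad \varphi^1_v=-\varphi^2_v.
\end{equation*}

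The key simplification is to pass to the null (idempotent) basis $e_{\pm}=\tfrac{1}{2}(1\pm j)$, which satisfies $e_+^2=e_+$, $e_-^2=e_-$, $e_+e_-=0$, together with $je_+=e_+$ and $je_-=-e_-$. Writing $\varphi=\alpha e_++\beta e_-$ with $\alpha=\varphi^1+\varphi^2$ and $\beta=\varphi^1-\varphi^2$, multiplication by $j$ on the target is diagonalized in exactly the same way as $J$ is diagonalized on the domain, so the condition $d\varphi\circ J=J'\circ d\varphi$ becomes the decoupled pair $\beta_u=0$ and $\alpha_v=0$; equivalently, the equations above combine into $\partial_v(\varphi^1+\varphi^2)=0$ and $\partial_u(\varphi^1-\varphi^2)=0$. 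Hence $\alpha$ depends only on $u$ and $\beta$ only on $v$. Setting $f(u):=\alpha=\varphi^1+\varphi^2$ and $g(v):=\beta=\varphi^1-\varphi^2$ and solving for $\varphi^1,\varphi^2$ gives $\varphi^1=\tfrac{1}{2}(f(u)+g(v))$ and $\varphi^2=\tfrac{1}{2}(f(u)-g(v))$, which is precisely the asserted form.

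For the converse I would simply substitute $\varphi=\tfrac{1}{2}(f(u)+g(v))+j\tfrac{1}{2}(f(u)-g(v))$ into the para-Cauchy--Riemann equations: since $\varphi^1+\varphi^2=f(u)$ is $v$-independent and $\varphi^1-\varphi^2=g(v)$ is $u$-independent, both equations hold identically, so $\varphi$ is paraholomorphic.

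There is no deep obstacle here; the entire content is the translation of the intrinsic condition $d\varphi\circ J=J'\circ d\varphi$ into coordinates. The only points requiring care are the sign conventions fixed by $x=\tfrac{u-v}{2}$, $y=\tfrac{u+v}{2}$ and the correct reading of $J'$ as multiplication by $j$ (so that it interchanges $\varphi^1$ and $\varphi^2$). Choosing the idempotent basis $e_\pm$ from the outset removes this bookkeeping, because then $J$ and $J'$ are simultaneously diagonal and the system decouples into one equation in $u$ and one in $v$, making the separation of variables and the appearance of the functions $f=f(u)$ and $g=g(v)$ automatic.
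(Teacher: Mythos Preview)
Your proof is correct and essentially the same as the paper's: the paper also tests $d\varphi\circ J=J'\circ d\varphi$ on $\partial/\partial u$ and $\partial/\partial v$ to obtain $(\varphi^1-\varphi^2)_u=0$ and $(\varphi^1+\varphi^2)_v=0$, and then sets $g(v)=\varphi^1-\varphi^2$, $f(u)=\varphi^1+\varphi^2$. Your use of the idempotent basis $e_\pm=\tfrac{1}{2}(1\pm j)$ is a pleasant conceptual addition that explains \emph{why} the equations decouple, but the underlying computation is identical.
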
 It follows directly from the observations that $\varphi$ satisfies $d\varphi (J(\frac{\partial}{\partial u}))=J(d\varphi(\frac{\partial}{\partial u}))$ if and only if there exists a function $g=g(v)$ such that $\varphi^1-\varphi^2=g$, and $d\varphi (J(\frac{\partial}{\partial v}))=J(d\varphi(\frac{\partial}{\partial v}))$ if and only if there exists a function $f=f(u)$ such that $\varphi^1+\varphi^2=f$, where $J$ is the canonical paracomplex structure on $\mathbb{C}'$. A $1$-form $\omega$ is called {\it paraholomorphic} if $\omega$ can be written as $\omega=\hat{\omega}dz$ in any local paracomplex coordinate $z$ with a paraholomorphic function $\hat{\omega}$.

In \cite{T}, Takahashi introduced the notion of timelike minimal surfaces with some kind of singular points of rank one, which are called {\it minfaces} as follows: 
\begin{definition}[\cite{T}]\label{def:minface}
Let $(\Sigma,J)$ be a para-Riemann surface. A smooth map $f: \Sigma \longrightarrow \mathbb{L}^3$ is a {\it minface} if 
there is an open dense set $W\subset \Sigma$ such that $f$ is a conformal timelike minimal immersion on $W$, and
on each null coordinate system $(u,v)$, $f_u\neq 0$ and $f_v\neq 0$ at each point. A point $p\in \Sigma$ is called a {\it singular point} of $f$ if $f$ is not an immersion at $p$. \end{definition}

\begin{remark}
In \cite{KKSY}, Kim, Koh, Shin and Yang defined the notion of {\it generalized timelike minimal surfaces} as follows: Let $\Sigma$ be a $2$-dimensional $C^2$-manifold. A non-constant map $f: \Sigma \longrightarrow \mathbb{L}^3$ is called a {\it generalized timelike minimal surface} if at each point of $\Sigma$ there exists a local coordinate system $(x, y)$ such that 
(i) $\langle f_x, f_x \rangle \equiv -\langle f_y, f_y \rangle \geq0$, $\langle f_x, f_y \rangle \equiv0$,
(ii) $f_{xx}-f_{yy}\equiv 0$ and
(iii) $\langle f_x, f_x \rangle = -\langle f_y, f_y \rangle >0$ almost everywhere on $\Sigma$. A singular point of such a surface is in either $\mathcal{A}:=\{p \mid \text{$f_x$  or $f_y$ is lightlike}\}$ or $\mathcal{B}:=\{p \mid \text{$df_p$ vanishes}\}$. By definition, a minface is a generalized timelike minimal surface without singular points belonging to $\mathcal{B}$. However, the converse is not true, that is, we can construct an example of a generalized timelike minimal surface with only singular points belonging to $\mathcal{A}$ which is not a minface by taking only one of two generating null curves with a singular point.
\end{remark}

A paraholomorphic map $F=(F^0,F^1,F^2): \Sigma \longrightarrow {\mathbb{C}'}^3$ is called a {\it Lorentzian null map} if 
\begin{equation*}
F_z\cdot F_z:= -(F^0_z)^2+(F^1_z)^2+(F^2_z)^2\equiv 0
\end{equation*}
holds on $\Sigma$, where $z=\frac{u-v}{2}+j\frac{u+v}{2}$ is a local paracomplex coordinate in a domain $U\subset \Sigma$ and $F_z=\frac{\partial F}{\partial z}=\frac{1}{2}\left[\left( \frac{\partial F}{\partial u}-\frac{\partial F}{\partial v}\right)+j\left( \frac{\partial F}{\partial u}+\frac{\partial F}{\partial v}\right)\right]$. By the paraholomorphicity of $F$ and Fact \ref{fact:paraholomorphicity}, we can take the decomposition of $F: U \longrightarrow {\mathbb{C}'}^3$ as 
\begin{equation}\label{eq:F}
F(z)=\frac{\varphi(u)+\psi(v)}{2}+j\frac{\varphi(u)-\psi(v)}{2}.
\end{equation}
Since 
\begin{equation*}
F_z\cdot F_z=\frac{1}{2}\left[\langle \varphi'(u), \varphi'(u) \rangle+\langle \psi'(v), \psi'(v) \rangle+2j\left(\langle \varphi'(u), \varphi'(u) \rangle-\langle \psi'(v), \psi'(v) \rangle\right)\right],
\end{equation*}
we have the following:
\begin{fact}[\cite{T}]\label{fact:Lorentzian null map}
Let $U$ be a domain in $\mathbb{C}'$ and $z=x+jy=\frac{u-v}{2}+j\frac{u+v}{2}$ be the canonical coordinate on $\mathbb{C}'$. If we take the decomposition (\ref{eq:F}), then the following conditions are equivalent:
\begin{itemize}
\item[(i)] $F$ is a Lorentzian null map, 
\item[(ii)] $\varphi$ and $\psi$ satisfy $\langle \varphi'(u), \varphi'(u) \rangle =0$ and $\langle \psi'(v), \psi'(v) \rangle =0$.
\end{itemize}
\end{fact}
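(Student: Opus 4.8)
The plan is to obtain the equivalence directly from the expression for $F_z \cdot F_z$ that is computed immediately before the statement. Recall that, by definition, $F$ is a Lorentzian null map exactly when $F_z \cdot F_z \equiv 0$ on $U$. Using the decomposition (\ref{eq:F}) and the paraholomorphicity of $F$ (Fact \ref{fact:paraholomorphicity}), together with the paracomplex multiplication rules $(1 \pm j)^2 = 2(1 \pm j)$ and $(1+j)(1-j) = 0$, one arrives at
\[
F_z \cdot F_z = \frac{1}{2}\left[\langle \varphi', \varphi'\rangle + \langle \psi', \psi'\rangle + 2j\left(\langle \varphi', \varphi'\rangle - \langle \psi', \psi'\rangle\right)\right],
\]
which is the identity already displayed above the statement. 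So the entire matter reduces to deciding when the right-hand side vanishes identically.

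The key step is to separate this paracomplex identity into its two real components. Since $\{1, j\}$ is an $\mathbb{R}$-basis of the paracomplex algebra $\mathbb{C}'$, a paracomplex number $a + jb$ with $a, b \in \mathbb{R}$ vanishes if and only if $a = 0$ and $b = 0$; and here both $\langle \varphi', \varphi'\rangle$ and $\langle \psi', \psi'\rangle$ are real-valued. Applying this, $F_z \cdot F_z \equiv 0$ holds if and only if the two real equations
\[
\langle \varphi', \varphi'\rangle + \langle \psi', \psi'\rangle = 0 \quad\text{and}\quad \langle \varphi', \varphi'\rangle - \langle \psi', \psi'\rangle = 0
\]
hold identically on $U$. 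This is a linear system in the two unknowns $\langle \varphi', \varphi'\rangle$ and $\langle \psi', \psi'\rangle$ whose only solution is $\langle \varphi', \varphi'\rangle = 0$ and $\langle \psi', \psi'\rangle = 0$, which is precisely condition (ii). The converse is immediate: if both inner products vanish, the displayed formula gives $F_z \cdot F_z = 0$.

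I do not expect any genuine obstacle here: all of the content is carried by the computation of $F_z \cdot F_z$ in terms of $\varphi'$ and $\psi'$, and once that formula is in place the equivalence is simply the linear independence of $1$ and $j$ over $\mathbb{R}$ followed by the inversion of a trivial $2 \times 2$ system. The only point to be careful about is to note that $\langle \varphi', \varphi'\rangle$ and $\langle \psi', \psi'\rangle$ are real scalars, so that the real and imaginary parts of $F_z \cdot F_z$ may indeed be set to zero separately.
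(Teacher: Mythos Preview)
Your proposal is correct and matches the paper's approach exactly: the paper simply displays the formula for $F_z\cdot F_z$ and states the Fact as an immediate consequence, and you have spelled out that consequence by separating the real and $j$-parts and solving the resulting $2\times 2$ system. There is nothing to add.
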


\begin{remark}\label{Immersed condition}
The condition (ii) above does not mean that $\varphi$ and $\psi$ are null curves because there may be a point $p$ such that $\varphi'(p)=0$ or $\psi'(p)=0$. Since the Jacobi matrix of $F$ can be written as 
\begin{align*}
JF= \frac{1}{2}\left(
    \begin{array}{cc}
      \varphi' &\psi'   \\
     \varphi'  &-\psi'
      \end{array}
  \right),
\end{align*}
a necessary and sufficient condition that the Lorentzian null map $F$ as above is an immersion is both of $\varphi$ and $\psi$ are null curves.
\end{remark}

Similar to the case of maxfaces (see Proposition 2.3 in \cite{UY}), any minface can be written by using a paraholomorphic Lorentzian null immersion as follows:
\begin{fact}[\cite{T}]\label{fact:p-lift}
Let $(\Sigma, J)$ be a para-Riemann surface and $f : \Sigma \rightarrow \mathbb{L}^3$ be a minface. Then there is a paraholomorphic Lorentzian null immersion $F: \widetilde{\Sigma} \longrightarrow {\mathbb{C}'}^3$ such that $f\circ \pi=F+\overline{F}$, where $\pi: \widetilde{\Sigma} \longrightarrow \Sigma$ is the universal covering map of $\Sigma$. Conversely, if $F: \widetilde{\Sigma} \longrightarrow {\mathbb{C}'}^3$ is a paraholomorphic Lorentzian null immersion which gives a timelike minimal immersion $f=F+\overline{F}$ on an open dense set, then $f$ is a minface.
\end{fact}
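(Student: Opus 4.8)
The plan is to prove a paracomplex Weierstrass-type representation, mirroring the maxface case (Proposition 2.3 in \cite{UY}), by treating the two directions separately; the forward direction is the substantive one. Starting from a minface $f:\Sigma\to\mathbb{L}^3$, I would work on the open dense set $W$ where $f$ is a conformal timelike minimal immersion, in a null coordinate system $(u,v)$. Minimality gives $H\nu=\frac{2}{\Lambda}f_{uv}=0$, so $f_{uv}\equiv 0$ on $W$, while conformality gives $\langle f_u,f_u\rangle=\langle f_v,f_v\rangle=0$ on $W$. Since $f$ is smooth on all of $\Sigma$ and $W$ is dense, all three relations extend to $\Sigma$ by continuity. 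Using the identity $\partial_z\partial_{\bar z}=-\partial_u\partial_v$ that follows from the excerpt's formula for $\partial_z$, the condition $f_{uv}\equiv 0$ says $f_{z\bar z}\equiv 0$; hence the $(\mathbb{C}')^3$-valued $1$-form $f_z\,dz$ is paraholomorphic, and therefore closed. Pulling it back to the universal cover $\widetilde{\Sigma}$ and integrating (legitimate because $\widetilde{\Sigma}$ is simply connected, so a closed form is exact) yields a paraholomorphic map $F=\int f_z\,dz$ with $F_z=f_z$.

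Next I would verify the three required properties of $F$. First, $F+\overline{F}=f\circ\pi$ up to an additive constant: paraholomorphicity gives $\overline{F}_z=0$, and since $f$ is real-valued one has $\overline{f_z}=f_{\bar z}$, so $F+\overline{F}$ and $f\circ\pi$ have the same $z$- and $\bar z$-derivatives and differ only by a constant, which I absorb. Second, $F$ is a Lorentzian null map: expanding $F_z\cdot F_z=f_z\cdot f_z$ in null coordinates and substituting $\langle f_u,f_u\rangle=\langle f_v,f_v\rangle=0$ gives $f_z\cdot f_z=0$ on $W$, hence everywhere by continuity (this is exactly Fact \ref{fact:Lorentzian null map}). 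Third, $F$ is an immersion: by Fact \ref{fact:paraholomorphicity} write $F(z)=\frac{\varphi(u)+\psi(v)}{2}+j\frac{\varphi(u)-\psi(v)}{2}$, so that $f=\varphi+\psi$ with $f_u=\varphi'$ and $f_v=\psi'$; the minface condition $f_u\neq 0$, $f_v\neq 0$ then forces $\varphi'\neq 0$, $\psi'\neq 0$, and together with the null relations this makes $\varphi,\psi$ genuine null curves, so $F$ is immersed by Remark \ref{Immersed condition}.

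For the converse, given a paraholomorphic Lorentzian null immersion $F$ for which $f=F+\overline{F}$ is a timelike minimal immersion on an open dense set, the conformal timelike minimal clause of Definition \ref{def:minface} is supplied by hypothesis, so I need only verify that $f_u\neq 0$ and $f_v\neq 0$ in every null coordinate system. Decomposing $F$ as above, the fact that $F$ is an immersion gives, through Remark \ref{Immersed condition}, that both $\varphi$ and $\psi$ are null curves, whence $\varphi'\neq 0$, $\psi'\neq 0$ and therefore $f_u=\varphi'\neq 0$, $f_v=\psi'\neq 0$ at every point. Thus $f$ satisfies Definition \ref{def:minface} and is a minface.

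The main obstacle is the tension between the local hypotheses and the global conclusion: the defining relations $f_{uv}=0$ and $\langle f_u,f_u\rangle=\langle f_v,f_v\rangle=0$ are asserted only on the immersed locus $W$, yet $F$ must be built globally and remain an \emph{immersion} precisely over the singular set of $f$, where $f$ itself degenerates. The resolution is to propagate those relations from $W$ to all of $\Sigma$ by continuity, which is what makes $f_z\,dz$ globally paraholomorphic, and then to use the minface condition $f_u,f_v\neq 0$ in tandem with Remark \ref{Immersed condition} to guarantee that $F$ stays immersed exactly where $f$ fails to be; the passage to $\widetilde{\Sigma}$ disposes of the periods in the usual way.
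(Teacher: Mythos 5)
Your proposal is correct and follows essentially the same route as the paper's own proof: propagate $f_{z\bar z}\equiv 0$ (and the null relations) from the dense set $W$ by continuity, integrate the paraholomorphic $1$-form $\partial f$ on the simply connected cover $\widetilde{\Sigma}$ to obtain $F$ with $F+\overline{F}=f\circ\pi$, then use Fact \ref{fact:paraholomorphicity} to split $F$ into $\varphi(u)$, $\psi(v)$, Fact \ref{fact:Lorentzian null map} for nullity, and Remark \ref{Immersed condition} together with $f_u,f_v\neq 0$ for the immersion property and the converse. No gaps; your version merely makes the continuity extension step more explicit than the paper does.
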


\begin{proof}
By Definition \ref{def:minface}, there exists an open dense set $W\subset \Sigma$ such that $f|_W$ is a conformal timelike minimal immersion. Then if we take a paracomplex coordinate $z$ in a domain $U$, we  obtain $f_{z\bar{z}}\equiv 0$ on $U\cap W$. Since $W$ is a dense set, the equality as above holds on $U$, and hence $\partial f=f_zdz$ is a paraholomorphic 1-form on $\Sigma$. We can take a paraholomorphic map $F: \tilde{\Sigma} \longrightarrow {\mathbb{C}'}^3$ such that $dF=\partial(f\circ\pi)$. Since $\partial(F+\bar{F})=dF$, there exists a real number $c$ such that $F+\bar{F}=f\circ\pi+c$. In particular, we can take $c=0$. Let us take null coordinates $u$, $v$ in a domain $U\subset \Sigma$ near any point $p\in \Sigma$, and consider null coordinates $\tilde{u}$, $\tilde{v}$ in each connected component of $\pi^{-1}(U)$ such that $\pi \circ \tilde{u}=u$ and $\pi \circ \tilde{v}=v$. By Fact \ref{fact:paraholomorphicity}, we can take the following decomposition
\begin{equation*}
F(\tilde{u}, \tilde{v})=\frac{\varphi(\tilde{u})+\psi(\tilde{v})}{2}+j\frac{\varphi(\tilde{u})-\psi(\tilde{v})}{2}.
\end{equation*}
Since $F+\bar{F}=f\circ\pi+c$, we obtain
\begin{equation*}
f_u(\pi(\tilde{u},\tilde{v}))=\varphi'(\tilde{u}),\quad f_v(\pi(\tilde{u},\tilde{v}))=\psi'(\tilde{v}).
\end{equation*}
By the assumption that $f|_W$ is a conformal timelike minimal immersion and Fact \ref{fact:Lorentzian null map}, $F$ is a Lorentzian null map on $\tilde{\Sigma}$. By Remark \ref{Immersed condition}, we conclude that $F$ is an immersion on $\tilde{\Sigma}$. Next we prove the converse. By the assumption, $f=F+\bar{F}$ satisfies the condition (i) of Definition \ref{def:minface}, and the condition (ii) of Definition \ref{def:minface} follows from Remark \ref{Immersed condition}.
\end{proof}

In particular, any minface $f$ can be written as the equation (\ref{null curves decomposition}). 
We call the paraholomorphic Lorentzian null immersion $F$ as above the {\it paraholomorphic lift} of the minface $f$. Moreover the following Weierstrass-type representation formula for minfaces is known.

\begin{fact}[Local version of the Weierstrass representation formula in {\cite{T}}]\label{theorem:p-Weierstrass}
Let $f : \Sigma \rightarrow \mathbb{L}^3$ be a minface.  For each point $p \in \Sigma$, after a rotation with respect to the time axis, there exist a paraholomorphic function $g$ and a paraholomorphic 1-form $\omega=\hat{\omega}dz$ which are defined near $p$ such that $f$ can be written as follows
\begin{equation}\label{eq:p-Weierstrass}
  f(z)
  =\Re \int^z_{z_0}\left( -1-g^2, j(1-g^2), 2g \right)\omega
    +f(z_0).
\end{equation}

Moreover if we decompose paraholomorphic functions $g$ and $\hat{\omega}$ into 
\begin{align}
g(z)&=\frac{g_1(u)+g_2(v)}{2}+j\frac{g_1(u)-g_2(v)}{2},\label{decomp1}\\ 
\hat{\omega}(z)&=\frac{\hat{\omega}_1(u)+\hat{\omega}_2(v)}{2}+j\frac{\hat{\omega}_1(u)-\hat{\omega}_2(v)}{2},\quad z=x+jy=\frac{u-v}{2}+j\frac{u+v}{2}, \label{decomp2}
\end{align}
then, a minface $f$ can be decomposed into two null curves as the equation (\ref{eq:p-Weierstrass_real1}).
\end{fact}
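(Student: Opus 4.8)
The plan is to follow the strategy of the maxface Weierstrass representation (Proposition 2.3 in \cite{UY}), adapting every step to the paracomplex algebra $\mathbb{C}'$. By Fact \ref{fact:p-lift} I may pass to the universal cover and work with the paraholomorphic Lorentzian null immersion $F=(F^0,F^1,F^2)$ satisfying $f\circ\pi=F+\overline{F}=2\Re F$; thus $\partial f=dF=F_z\,dz$ is paraholomorphic and the null condition $-(F^0_z)^2+(F^1_z)^2+(F^2_z)^2=0$ holds. The goal is to write $F_z$ as a paraholomorphic multiple of the fixed null vector $\Phi=(-1-g^2,j(1-g^2),2g)$. Motivated by the identities $\Phi^0-j\Phi^1=-2$ and $\Phi^2/(\Phi^0-j\Phi^1)=-g$, I would set
\begin{equation*}
g=-\frac{F^2_z}{F^0_z-jF^1_z},\qquad \hat\omega=-(F^0_z-jF^1_z),
\end{equation*}
and $\omega=\hat\omega\,dz$.

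First I would check that these definitions reproduce (\ref{eq:p-Weierstrass}). Since $-(F^0_z)^2+(F^1_z)^2=-(F^0_z-jF^1_z)(F^0_z+jF^1_z)$ by $j^2=1$, the null condition reads $(F^0_z-jF^1_z)(F^0_z+jF^1_z)=(F^2_z)^2$, which together with $F^0_z-jF^1_z=-\hat\omega$ and $F^2_z=g\hat\omega$ gives $F^0_z+jF^1_z=-g^2\hat\omega$. Adding and subtracting then yields $F_z=\tfrac12(-1-g^2,j(1-g^2),2g)\hat\omega$, so that $f=2\Re\int F_z\,dz+f(z_0)=\Re\int(-1-g^2,j(1-g^2),2g)\omega+f(z_0)$, as claimed. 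That $g$ and $\hat\omega$ are paraholomorphic follows because a product and a quotient of paraholomorphic functions are paraholomorphic, provided the denominator is a unit of $\mathbb{C}'$.

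The hard part is precisely this proviso: $\mathbb{C}'$ has zero divisors, namely multiples of the idempotents $e_\pm=(1\pm j)/2$, so $F^0_z-jF^1_z$ need not be invertible. Writing $F_z=\varphi'(u)\,e_+-\psi'(v)\,e_-$ in idempotent coordinates, I would compute that $F^0_z-jF^1_z$ is a unit exactly when its $e_+$- and $e_-$-components $\varphi^{0\prime}-\varphi^{1\prime}$ and $-(\psi^{0\prime}+\psi^{1\prime})$ are both nonzero; since $\varphi,\psi$ are genuine null curves (Remark \ref{Immersed condition}), these vanish only when $\varphi'(p)$ or $\psi'(p)$ points along the exceptional lightlike directions $(1,\pm1,0)$. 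A rotation with respect to the time axis is an isometry of $\mathbb{L}^3$ acting as an ordinary rotation on $(x^1,x^2)$, and a generic such rotation carries $\varphi'(p)$ and $\psi'(p)$ off these finitely many directions; by continuity the unit condition then holds on a whole neighborhood of $p$. This is where the hypothesis ``after a rotation with respect to the time axis'' is used, and it is the only genuine obstacle.

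Finally, for the ``moreover'' part I would decompose $g$ and $\hat\omega$ by Fact \ref{fact:paraholomorphicity}, that is, project onto the $e_+$- and $e_-$-parts to obtain $(g_1,\hat\omega_1)$ as functions of $u$ and $(g_2,\hat\omega_2)$ as functions of $v$, which gives (\ref{decomp1}) and (\ref{decomp2}). For a paraholomorphic $\Psi=\Psi^1+j\Psi^2$ one has $\Re(\Psi\,dz)=\Psi^1\,dx+\Psi^2\,dy$, and substituting $x=(u-v)/2$, $y=(u+v)/2$ splits this as $\tfrac12(\Psi^1+\Psi^2)\,du-\tfrac12(\Psi^1-\Psi^2)\,dv$, whose coefficients are $\tfrac12$ times the $e_+$- and $e_-$-projections of $\Psi$. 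Applying this componentwise to $(-1-g^2,j(1-g^2),2g)\hat\omega$, and using that $j\mapsto\pm1$ under the $e_\pm$-projections, the $du$-integral becomes $\tfrac12\int(-1-g_1^2,1-g_1^2,2g_1)\omega_1$ and the $dv$-integral becomes $\tfrac12\int(1+g_2^2,1-g_2^2,-2g_2)\omega_2$, the sign pattern of the $\psi$-term arising from the $-\tfrac12$ factor on $dv$. This is exactly (\ref{eq:p-Weierstrass_real1}); moreover $\hat\omega_1,\hat\omega_2\neq0$ and $g_1g_2\neq1$ on an open dense set follow from the rotation normalization and the fact that $f$ is a conformal immersion on an open dense subset.
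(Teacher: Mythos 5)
Your proposal is correct and follows essentially the paper's own proof: you use the same Weierstrass data $\hat{\omega}=-f^0_z+jf^1_z$ and $g=f^2_z/\hat{\omega}$ (via $f_z=F_z$ from Fact \ref{fact:p-lift}), derive (\ref{eq:p-Weierstrass}) from the same factorization $-(F^0_z)^2+(F^1_z)^2=-(F^0_z-jF^1_z)(F^0_z+jF^1_z)$ of the null condition, and obtain (\ref{eq:p-Weierstrass_real1}) by the same separation into $du$- and $dv$-parts that the paper carries out for $f_u$ and $f_v$. The only (harmless) stylistic deviation is at the rotation step: where the paper exhibits a concrete angle through the two-case computation of $\langle \hat{\omega}_{\theta}(p)\rangle^2$, you observe the equivalent fact that $\hat{\omega}(p)$ fails to be a unit of $\mathbb{C}'$ exactly when $\varphi'(p)$ or $\psi'(p)$ points along $(1,1,0)$ or $(1,-1,0)$, so that all but at most two rotation angles work.
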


\begin{proof}
Let us take a paracomplex coordinate $z$ near $p$ and consider the followings
\begin{equation*}
\omega=\hat{\omega}dz=(-f^0_z+jf^1_z)dz, \quad g=\frac{f^2_z}{\hat{\omega}}.
\end{equation*}
Here, we shall prove that after a rotation with respect to the time axis, we can take $\langle \hat{\omega}(p)\rangle^2 \neq 0$, that is, $g$ is locally paraholomorphic. Let us assume that $\langle \hat{\omega}(p)\rangle^2=0$ and take the paraholomorphic lift $F$ of $f$ as equation (\ref{eq:F}). Since $\hat{\omega}=-f^0_z+jf^1_z$ and $f_z=\frac{\varphi_u-\psi_v}{2}+j\frac{\varphi_u+\psi_v}{2}$, we obtain
\begin{align*}
\langle \hat{\omega}\rangle^2&=(-f^0_z+jf^1_z)(-f^0_z-jf^1_z)\\
&=(-\varphi_u^0+\varphi_u^1)(\psi_v^0+\psi_v^1).
\end{align*}
For arbitrary $\theta\in \mathbb{R}$, let us define
\begin{align*}
\tilde{f}:= \left(
    \begin{array}{ccc}
      1 &0  & 0 \\
     0  & \cos \theta & -\sin \theta \\
      0 & \sin \theta & \cos \theta 
 \end{array}
  \right)f \quad \text{and}\quad \hat{\omega}_{\theta}:=-\tilde{f}^0_z+j\tilde{f}^1_z.
\end{align*}
Next we prove that there exists a $\theta$ such that $\langle \hat{\omega}_{\theta}\rangle^2 \neq 0$ at $p$. A similar computation as above shows that
\begin{equation*}
\langle \hat{\omega}_{\theta}\rangle^2=(-\varphi_u^0+\varphi_u^1\cos \theta-\varphi_u^2\sin \theta)(\psi_v^0+\psi_v^1\cos \theta-\psi_v^2\sin \theta).
\end{equation*}
Note that the assumption $\langle \hat{\omega}(p)\rangle^2=0$ is equivalent to the condition
\begin{equation*}
-\varphi_u^0(p)+\varphi_u^1(p)=0\quad \text{or}\quad \psi_v^0(p)+\psi_v^1(p)=0.
\end{equation*}
In the former case, $\varphi_u^2(p)=0$ because $F$ is a Lorentzian null map. Since $f$ is a minface, we get $\varphi_u^0(p)=\varphi_u^1(p)\neq0$ and

\begin{equation}\label{eq:omega-theta}
\langle \hat{\omega}_{\theta}(p)\rangle^2=\varphi_u^0(p)(-1+\cos \theta)(\psi_v^0(p)+\psi_v^1(p)\cos \theta-\psi_v^2(p)\sin \theta).
\end{equation}
Let us consider the quantity $\langle \hat{\omega}_{\pi}(p)\rangle^2=-2\varphi_u^0(p)(\psi_v^0(p)-\psi_v^1(p))$. If it is non-zero then the proof is completed. We consider the case $\langle \hat{\omega}_{\pi}(p)\rangle^2=0$. Again, we can see that $\psi_v^2(p)=0$ and $\psi_v^0(p)=\psi_v^1(p)\neq 0$. The equation (\ref{eq:omega-theta}) can be written as
\begin{equation*}
\langle \hat{\omega}_{\theta}(p)\rangle^2=\varphi_u^0(p)(-1+\cos \theta)\psi_v^0(p)(1+\cos \theta),
\end{equation*}
and hence we can take a $\theta$ such that $\langle \hat{\omega}_{\theta}(p)\rangle^2\neq0$. The proof for the case that $\psi_v^0(p)+\psi_v^1(p)=0$ is similar. Therefore we can take $\omega$ and $g$ as paraholomorphic $1$-form and paraholomorphic function near $p$.

Next let us prove the equations (\ref{eq:p-Weierstrass}) and (\ref{eq:p-Weierstrass_real1}). By a straightforward computation, we obtain 
\begin{equation}\label{eq:f_z}
f_zdz=\frac{1}{2}(-1-g^2,j(1-g^2),2g)\omega,
\end{equation}
and hence we obtain the equation (\ref{eq:p-Weierstrass}). For the null coordinates $u$ and $v$, $f$ can be written as $f_z=\frac{f_u-f_v}{2}+j\frac{f_u+f_v}{2}$. By (\ref{eq:f_z}), we get the relation
\begin{equation*}
f_u-f_v=\Re(-1-g^2,j(1-g^2),2g)\hat{\omega}.
\end{equation*}
By using the decompositions (\ref{decomp1}) and (\ref{decomp2}), the equation above can be written
\begin{equation*}
  f_u-f_v=\frac{\hat{\omega}_1}{2}\left( -1-(g_1)^2,1-(g_1)^2, 2g_1 \right)+\frac{\hat{\omega}_2}{2}\left( -1-(g_2)^2,-1+(g_2)^2, 2g_2 \right),
\end{equation*}
and hence 
\begin{equation*}
  f_u=\frac{\hat{\omega}_1}{2}\left( -1-(g_1)^2,1-(g_1)^2, 2g_1 \right) \text{and } f_v=\frac{\hat{\omega}_2}{2}\left( 1+(g_2)^2,1-(g_2)^2, -2g_2 \right).
\end{equation*}
By integrating the derivative $df=f_udu+f_vdv$, we obtain the desired equation (\ref{eq:p-Weierstrass_real1}). 
\end{proof}

\begin{remark}
The formula as mentioned above is valid locally, that is, we cannot choose the function $g$ in Fact \ref{theorem:p-Weierstrass} as a paraholomorphic function globally. However, the notion of {\it parameromorphic function} was introduced in \cite{T} and by using it, Takahashi gave the same formula as (\ref{eq:p-Weierstrass}) with a paraholomorphic 1-form $\omega$ and a parameromorphic function $g$ which are defined on the universal cover $\tilde{\Sigma}$ of $\Sigma$. In this paper we only need the formulas (\ref{eq:p-Weierstrass}) and (\ref{eq:p-Weierstrass_real1}) near each point to discuss the local behavior of the Gaussian curvature, and hence we can always take the function $g$ as a paraholomorphic function locally.
\end{remark}

\begin{remark}
It should be remarked that Magid \cite{Magid} originally proved a representation formula using null curves similar to (\ref{eq:p-Weierstrass_real1}) away from singular points.
\end{remark}

In \cite{T}, the pair $(g,\omega)$ and the quadruple $(g_1, g_2,\omega_1, \omega_2)$ were called ({\it paraholomorphic}) {\it Weierstrass data} and {\it real Weierstrass data}, respectively. The imaginary part
\begin{equation}\label{eq:conjugate}
  f^*(z)
  :=\Im \int^z_{z_0}\left( -1-g^2, j(1-g^2), 2g \right)\omega
\end{equation}
also gives a minface which is called the {\it conjugate minface} of $f$. The conjugate minface is defined on $\tilde{\Sigma}$ and corresponding to a minface with the Weierstrass data $(g,j\omega)$ or the real Weierstrass data $(g_1, g_2,\omega_1, -\omega_2)$.

In the end of the paper, we give a proof of Fact \ref{Fact: T-lemma} and dualities of singular points on minfaces, which were given in \cite{T}. To prove Fact \ref{Fact: T-lemma}, we use the following criteria for cuspidal edges, swallowtails and cuspidal cross caps:
\begin{fact}[\cite{KRSUY}]\label{Fact: KRSUY}
Let $f: U \longrightarrow \mathbb{R}^3$ be a front and $p\in U$ a non-degenerate singular point of $f$. Take a singular curve $\gamma=\gamma(t)$ with $\gamma(0)=p$ and a vector field of null directions $\eta(t)$. Then
\begin{itemize}
\item[(i)] $p$ is a cuspidal edge if and only if $\det\left(\gamma'(0), \eta(0)\right)\neq 0$.
\item[(ii)] $p$ is a swallowtail if and only if $\det\left(\gamma'(0), \eta(0)\right)=0$ and $ \left. \frac{d}{dt}\det\left(\gamma'(t), \eta(t)\right)\right|_{t=0}\neq0$.
\end{itemize}
\end{fact}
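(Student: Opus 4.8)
The plan is to prove both equivalences by reducing the front germ $f$ to the normal forms $f_C$ and $f_S$ in coordinates adapted to the singular curve and the null direction, using the front condition to control the low-order Taylor data and then realizing the match by explicit coordinate changes. First I would fix adapted coordinates near $p$: since $p$ is non-degenerate we have $d\lambda_p\neq0$, so the singular set is the image of a regular curve $\gamma$, and the null direction extends to a nonvanishing vector field $\eta$ near $p$ with $\eta|_\gamma\in\ker df$. The crucial structural input is that $f$ is a \emph{front}: its Legendrian lift $L=(f,n)$ is an immersion, which forces the unit normal to turn transversally across $\gamma$, i.e.\ $dn(\eta)\neq0$ along $\gamma$. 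This is exactly what rules out the cuspidal-cross-cap regime and guarantees that, along the null direction, $f$ produces a genuine $(2,3)$-cusp in the transverse slice rather than a cross-cap.

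For part (i), assume $\det(\gamma'(0),\eta(0))\neq0$, so $\eta(0)$ is transverse to the singular curve. I would then choose coordinates $(t,s)$ with the $t$-axis equal to $\gamma$ and $\partial_s=\eta$, giving $f_s(t,0)\equiv0$. Taylor expanding yields $f(t,s)=\gamma(t)+s^2a(t)+s^3b(t)+\cdots$, where $a=\tfrac12 f_{ss}(t,0)$ and $b=\tfrac16 f_{sss}(t,0)$. Using the front non-degeneracy from the first paragraph, I would verify that $a(t)$ and $b(t)$ are linearly independent and that $\gamma'(t)$ is transverse to their span. Treating $\{\gamma',a,b\}$ as a moving frame and applying a Malgrange-type preparation argument then recognizes $f$ as $\mathcal{A}$-equivalent to $f_C(u,v)=(u^2,u^3,v)$. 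The converse is immediate: computing $\gamma'$ and $\eta$ directly for $f_C$ shows they are transverse.

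For part (ii), assume $\det(\gamma'(0),\eta(0))=0$ but $\left.\frac{d}{dt}\det(\gamma'(t),\eta(t))\right|_{t=0}\neq0$, so $\eta$ is tangent to $\gamma$ at $p$ with first-order contact. The analysis must go one order higher. After adapting coordinates I would compute the $4$-jet of $f$ along $\gamma$ and match it, via source and target changes, with the $4$-jet of $f_S(u,v)=(3u^4+u^2v,4u^3+2uv,v)$: the degeneracy $\det(\gamma',\eta)=0$ kills the cuspidal-edge coefficient, while the transversality of its derivative supplies the nonvanishing higher coefficient that produces the swallowtail. A finite-determinacy argument for $f_S$ then upgrades jet-level agreement to full $\mathcal{A}$-equivalence, and again the converse follows by evaluating the two determinants on $f_S$ itself.

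The hard part will be the recognition step, namely showing that these transversality conditions are not only necessary but \emph{sufficient} for $\mathcal{A}$-equivalence to $f_C$ and $f_S$. This rests on finite-determinacy of the relevant germs (cf.\ \cite{AGV}) together with the explicit construction of the linearizing diffeomorphisms, and the swallowtail case is the genuine obstacle: it requires one extra order of differentiation, the preparation argument on the $4$-jet is more delicate, and one must check carefully that the pair of conditions on $(\gamma',\eta)$ pins down precisely the jet lying in the determinacy class of $f_S$. Once this realization is in place, combining it with the front structure established at the outset gives both equivalences as stated in \cite{KRSUY}.
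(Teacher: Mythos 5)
The paper does not prove this statement at all: it is imported as a known criterion from \cite{KRSUY}, so your proposal can only be measured against the original argument there, which indeed proceeds, as you plan, by normalizing coordinates along the singular curve and constructing the equivalences directly. Your outline points in the right direction, but the steps you yourself label ``the hard part'' are not incidental --- they \emph{are} the theorem, and as written they contain genuine gaps. Most concretely, in part (ii) your part-(i) normalization is unavailable: at a swallowtail $\eta(0)$ is tangent to $\gamma'(0)$, so you cannot simultaneously take the $t$-axis equal to $\gamma$ and $\partial_s=\eta$. The standard remedy is to straighten the null vector field instead (take $\eta=\partial_u$ everywhere), after which the singular curve is tangent to the $u$-axis at $p$ and must first be recovered from $\lambda=0$ via the implicit function theorem; only then can the hypotheses $\det\left(\gamma'(0),\eta(0)\right)=0$ and $\left.\frac{d}{dt}\det\left(\gamma'(t),\eta(t)\right)\right|_{t=0}\neq0$ be translated into conditions on Taylor coefficients of $f$. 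Without this step the claimed matching of $4$-jets with $f_S$ is unsupported, and invoking finite determinacy of $f_S$ (which does hold, $f_S$ being an $\mathcal{A}$-stable germ) only helps \emph{after} the jet has been normalized --- that normalization is precisely what is missing.

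In part (i), the frame condition on $\hat{\gamma}'$, $a$, $b$ (where $\hat{\gamma}=f\circ\gamma$) is asserted rather than derived, and it is exactly where the two hypotheses enter quantitatively. In your coordinates one has $f_s=s\,g$ with $g(t,0)=f_{ss}(t,0)$, hence $\lambda=s\det\left(f_t,g,n\right)$, and since $\lambda$ vanishes identically on the $t$-axis, non-degeneracy $d\lambda_p\neq0$ is equivalent to $\det\bigl(\hat{\gamma}'(0),f_{ss}(p),n(p)\bigr)\neq0$; this gives $a(0)\neq0$ and its independence from $\hat{\gamma}'(0)$. Differentiating $\langle f_s,n\rangle=0$ twice and $\langle f_t,n\rangle=0$ once in $s$ along $s=0$ gives $\langle f_{sss},n\rangle=-2\langle f_{ss},dn(\eta)\rangle$ together with $dn(\eta)\perp\mathrm{span}\{n,\hat{\gamma}'\}$, so the front condition $dn(\eta)\neq0$ forces $\langle b,n\rangle\neq0$ and hence the frame condition --- short computations, but indispensable, and absent from your sketch. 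Finally, both ``only if'' directions need more than evaluating the determinants on $f_C$ and $f_S$: you must check that the vanishing of $\det(\gamma',\eta)$, and of its first $t$-derivative at a zero, is independent of the choices made (parametrization of $\gamma$, extension and scaling of $\eta$, coordinates on $U$) and is invariant under $\mathcal{A}$-equivalence, since a cuspidal edge or swallowtail of $f$ is only $\mathcal{A}$-equivalent to the model, not equal to it in your adapted coordinates. None of these repairs looks fatal --- the strategy is essentially the one carried out in \cite{KRSUY} --- but as it stands the proposal asserts the recognition steps rather than proving them.
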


\begin{fact}[\cite{FSUY}]\label{Fact: FSUY}
Let $f: U \longrightarrow \mathbb{R}^3$ be a frontal and $p\in U$ a non-degenerate singular point of $f$. Take a singular curve $\gamma=\gamma(t)$ with $\gamma(0)=p$ and a vector field of null directions $\eta(t)$. Then $p$ is a cuspidal cross cap if and only if \begin{center}$\det\left(\gamma'(0), \eta(0)\right)\neq0$, $\det\left(df(\gamma'(0)), n(0), dn(\eta(0))\right)=0$ and $\left.\frac{d}{dt}\det\left(df(\gamma'(t)), n(t), dn(\eta(t))\right)\right|_{t=0}\neq0$.\end{center}
\end{fact}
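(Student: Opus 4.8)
The plan is to reduce the germ of $f$ at $p$ to the normal form $f_{CCR}(u,v)=(u,v^2,uv^3)$ by constructing explicit source and target diffeomorphisms, after recasting the three stated conditions in adapted coordinates. As a preliminary I would record that all three conditions are independent of the admissible choices (the parametrization of $\gamma$, the scaling of $\eta$, the sign of $n$): each such change multiplies the relevant determinant by a nowhere-zero factor, hence preserves both its vanishing at $p$ and the nonvanishing of its $t$-derivative there. This invariance immediately reduces the direction ``$p$ is a cuspidal cross cap $\Rightarrow$ the three conditions'' to a direct computation on $f_{CCR}$ itself, so the substance of the proof lies in the converse.

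First I would set up adapted coordinates. Since $p$ is non-degenerate, the singular set is a regular curve, and $\det(\gamma'(0),\eta(0))\neq 0$ says the null direction is transverse to it; by the flow-box theorem one may choose coordinates $(u,v)$ centred at $p$ in which the singular set is $\{v=0\}$, $\gamma(u)=(u,0)$ and $\eta=\partial/\partial v$. Then $f_v(u,0)\equiv 0$, so $df(\gamma')=f_u$ and $dn(\eta)=n_v$, and the rank-one condition forces $f_u(p)\neq 0$, i.e.\ the singular image is a regular space curve.

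The second step is to translate conditions (b) and (c) into order-of-vanishing statements. Differentiating the frontal relations $\langle n,f_u\rangle=\langle n,f_v\rangle=0$ along $\{v=0\}$ gives $\langle n_v,f_u\rangle(u,0)=-\langle n,f_{uv}\rangle(u,0)=0$ (using $f_v(u,0)\equiv 0$, hence $f_{uv}(u,0)=0$) and $\langle n,f_{vv}\rangle(u,0)=0$; together with $n_v\perp n$ this forces $n_v(u,0)$ parallel to $f_u\times n$, say $n_v(u,0)=c(u)\,(f_u\times n)(u,0)$. Therefore $\det(df(\gamma'),n,dn(\eta))(u,0)=c(u)\,|f_u\times n|^2$, so condition (b) is equivalent to $c(0)=0$ (equivalently $n_v(p)=0$, i.e.\ $f$ is a frontal but not a front at $p$) and condition (c) to $c'(0)\neq 0$. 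A further $v$-differentiation of $\langle n,f_v\rangle=0$ yields $\langle n,f_{vvv}\rangle(u,0)=-2\langle n_v,f_{vv}\rangle(u,0)$, which is a nonzero multiple of $c(u)$ once one notes that the first-kind nondegeneracy makes $f_{vv}(p)$ transverse to $f_u$. Thus (b)+(c) say precisely that the out-of-tangent-plane part of $f_{vvv}$ along the singular curve vanishes to exactly first order at $p$.

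Finally I would construct the $\mathcal{A}$-equivalence. After an affine change in the target straightening the regular singular image to the $x$-axis, and a reparametrization of $v$ normalizing $\tfrac12 f_{vv}(u,0)$, the Taylor expansion $f(u,v)=f(u,0)+\tfrac{v^2}{2}f_{vv}(u,0)+\tfrac{v^3}{6}f_{vvv}(u,0)+O(v^4)$ already matches $(u,v^2,uv^3)$ to leading order, with the cross-cap coefficient governed by $c(u)=c'(0)u+O(u^2)$. The remaining task is to absorb the higher-order remainder by coordinate changes, using the division (Hadamard) lemma and the implicit function theorem. This last step is the main obstacle: one must verify that the $3$-jet data singled out by (a)--(c) genuinely determines the $\mathcal{A}$-class, i.e.\ that the tail $O(v^4)$ and the off-normal-form terms can be removed without disturbing the already-normalized lower-order data. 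This is exactly the finite-determinacy argument for the cuspidal cross cap, and it is where the careful bookkeeping resides.
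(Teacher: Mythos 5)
Your preliminary reductions are fine, but note first that the paper itself contains no proof of this statement: it is quoted as a Fact from \cite{FSUY} and used as a black box in the proof of Fact \ref{Fact: T-lemma}, so the benchmark is the proof in that reference. Your steps through the jet computation are correct and essentially reproduce the computations there: in adapted coordinates with singular set $\{v=0\}$ and $\eta=\partial/\partial v$ one has $f_v(u,0)\equiv 0$, non-degeneracy gives $\lambda_v=\det(f_u,f_{vv},n)\neq 0$ along $\{v=0\}$, and $n_v(u,0)=c(u)\,(f_u\times n)(u,0)$, so that conditions (b),(c) become $c(0)=0$, $c'(0)\neq 0$, i.e.\ first-order failure of the front condition. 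One caveat already here: you verify invariance of the three conditions only under reparametrization of $\gamma$, rescaling of $\eta$, and the sign of $n$, but your forward implication (``cuspidal cross cap $\Rightarrow$ conditions, by computing on $f_{CCR}$'') requires invariance under arbitrary $\mathcal{A}$-equivalences. Target diffeomorphisms of $\mathbb{R}^3$ are not Euclidean isometries; the unit normal of $\Psi\circ f$ is the normalized inverse-transpose image of $n$, and the invariance (up to nonvanishing factor) of $\det\left(df(\gamma'),n,dn(\eta)\right)$ under such changes is a genuine verification carried out in \cite{FSUY}, not a formality.

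The substantive gap is your last step. You defer the construction of the $\mathcal{A}$-equivalence to ``the finite-determinacy argument for the cuspidal cross cap,'' but no such argument is available: $f_{CCR}(u,v)=(u,v^2,uv^3)$ is \emph{not} finitely $\mathcal{A}$-determined. By the Mather--Gaffney criterion, finite determinacy requires stability off the origin, whereas every point $(u_0,0)$ with $u_0\neq 0$ is a cuspidal edge, and cuspidal edges are not stable map germs $\mathbb{R}^2\to\mathbb{R}^3$ (they are stable only as fronts). Consequently, matching the $3$-jet singled out by (a)--(c) does not by itself determine the $\mathcal{A}$-class, and the $O(v^4)$ tail cannot be absorbed by appeal to general determinacy theory; this is precisely why \cite{FSUY} prove a bespoke recognition lemma instead. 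Their route: after normalizing, via the implicit function theorem and a source change, to the pre-normal form $(u,\,v^2,\,b(u,v))$, split $b$ into parts even and odd in $v$ (Whitney's division lemma); the even part is a smooth function of $(u,v^2)$ and is absorbed by a target diffeomorphism through the first two components, leaving $\bigl(u,\,v^2,\,v^3h(u,v^2)\bigr)$. Conditions (b),(c) translate exactly into $h(0,0)=0$ and $h_u(0,0)\neq 0$, whence $(u,v)\mapsto\bigl(h(u,v^2),v\bigr)$ is a source diffeomorphism and one further target change produces $(u,v^2,uv^3)$ on the nose. Replacing your determinacy appeal by this explicit even/odd normalization is not bookkeeping; it is the core of the proof, and as written your argument does not contain it.
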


\begin{proof}[Proof of Fact \ref{Fact: T-lemma}] 
Since the singular set on $U$ is written by $\{p\in U\mid g_1(p)g_2(p)=1\}$, the singular curve $\gamma(t)=(\gamma_1(t), \gamma_2(t))$ near the non-degenerate singular point $p=\gamma(0)$ satisfies $g_1(\gamma_1(t))g_2(\gamma_2(t))=1$. Taking the derivative, we obtain
\begin{equation*}
g'_1\gamma'_1g_2+g_1g'_2\gamma'_2=0.
\end{equation*}
By using the equality $g_1g_2=1$ on the singular set, we can parametrize $\gamma$ as
\begin{equation*}
\gamma'(t)=\frac{g'_2}{g_2}\left(\frac{\partial}{\partial u}\right)_{\gamma(t)}-\frac{g'_1}{g_1}\left(\frac{\partial}{\partial v}\right)_{\gamma(t)}
\end{equation*}
and by the first equation (\ref{eq:eta_nu}), we obtain
\begin{equation}\label{eq:gamma_eta}
\det(\gamma'(t), \eta(t))=\frac{g_1'}{g_1^2\hat{\omega}_1}+\frac{g_2'}{g_2^2\hat{\omega}_2}. 
\end{equation}
By (i) of Fact \ref{Fact: KRSUY} and (iii) of Fact \ref{Lemma T-2}, we have proved the claim (i). By (\ref{eq:gamma_eta}), we can compute
\begin{align}\label{eq:dgamma_eta}
\left.\frac{d}{dt}\det\left(\gamma'(t), \eta(t)\right)\right|_{t=0}&=\left(\frac{g'_1}{g^2_1\hat{\omega}_1}\right)'\gamma'_1+\left(\frac{g'_2}{g^2_2\hat{\omega}_2}\right)'\gamma'_2\nonumber \\ 
&=\left(\frac{g'_1}{g^2_1\hat{\omega}_1}\right)'\frac{g'_2}{g_2}-\left(\frac{g'_2}{g^2_2\hat{\omega}_2}\right)'\frac{g'_1}{g_1}. 
\end{align}
By the equations (\ref{eq:gamma_eta}), (\ref{eq:dgamma_eta}), (ii) of Fact \ref{Fact: KRSUY} and (iii) of Fact \ref{Lemma T-2}, we obtain the claim (ii). Next we prove the claim (iii). After a straightforward computation we obtain 
\begin{equation}\label{eq:det_gamma_n_dn}
\det \left(df(\gamma'(t)), n(t),dn(\eta(t))\right)=\alpha \left(\frac{g_1'}{g_1^2\hat{\omega}_1}-\frac{g_2'}{g_2^2\hat{\omega}_2}\right),
\end{equation}
where $\alpha=\alpha(t)=-\frac{\hat{\omega}_1\hat{\omega}_2}{2}\left(\frac{g_1'}{g_1^2\hat{\omega}_1}+\frac{g_2'}{g_2^2\hat{\omega}_2}\right)$. Since cuspidal cross caps are non-degenerate singular points on frontals, we always assume that $\alpha(0)\neq0$.

Moreover,
\begin{align}\label{eq:ddet_gamma_n_dn}
&\left.\frac{d}{dt}\det \left(df(\gamma'(t)), n(t),dn(\eta(t))\right)\right|_{t=0} \nonumber \\
&=\alpha'(0) \left(\frac{g_1'}{g_1^2\hat{\omega}_1}-\frac{g_2'}{g_2^2\hat{\omega}_2}\right)+\alpha(0)\left[\left(\frac{g'_1}{g^2_1\hat{\omega}_1}\right)'\frac{g'_2}{g_2}+\left(\frac{g'_2}{g^2_2\hat{\omega}_2}\right)'\frac{g'_1}{g_1}\right].
\end{align}
By the equations (\ref{eq:gamma_eta}), (\ref{eq:det_gamma_n_dn}), (\ref{eq:ddet_gamma_n_dn}) and Fact \ref{Fact: FSUY}, we have proved the claim (iii).
\end{proof}

As a corollary of Fact \ref{Fact: T-lemma}, we obtain the following dualities of singular points corresponding to results for maxfaces in \cite{FSUY} and \cite{UY}. It is known that these kinds of dualities also hold for other surfaces, see also \cite{H,IS,OT}.
\begin{fact}[\cite{T}]\label{duality}
Let $f: \Sigma \longrightarrow \mathbb{L}^3$ be a minface and $p\in \Sigma$ a singular point.
\begin{itemize}
\item[(i)] A singular point $p$ of a minface $f$ is a cuspidal edge if and only if $p$ is a cuspidal edge of its conjugate minface $f^*$.
\item[(ii)] A singular point $p$ of a minface $f$ is a swallowtail (resp. cuspidal cross cap) if and only if $p$ is a cuspidal cross cap (resp. swallowtail) of its conjugate minface $f^*$.
\end{itemize}
\end{fact}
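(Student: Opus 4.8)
The plan is to read off both dualities directly from the criteria of Fact~\ref{Fact: T-lemma}, using the observation (recorded just above the statement) that the conjugate minface $f^*$ carries the real Weierstrass data $(g_1,g_2,\omega_1,-\omega_2)$. Thus passing from $f$ to $f^*$ amounts to the single substitution $\hat{\omega}_2\mapsto -\hat{\omega}_2$, while $g_1$, $g_2$ and $\hat{\omega}_1$ are left untouched. First I would check that this substitution preserves the class of (non-degenerate) singular points to which the criteria apply: the singular set is $\{g_1g_2=1\}$ by~(i) of Fact~\ref{Lemma T-2}, which does not involve $\hat{\omega}_2$, and by~(\ref{eq:dlambda}) the replacement only multiplies $d\lambda_p$ by $-1$. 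Hence $p$ is a non-degenerate singular point of $f$ if and only if it is one of $f^*$, so the three criteria of Fact~\ref{Fact: T-lemma} may legitimately be invoked for both surfaces.

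The heart of the argument is to track how the three relevant quantities transform under $\hat{\omega}_2\mapsto-\hat{\omega}_2$. Writing $A:=\frac{g_1'}{g_1^2\hat{\omega}_1}$ and $B:=\frac{g_2'}{g_2^2\hat{\omega}_2}$, the substitution fixes $A$ and sends $B\mapsto-B$; consequently it interchanges $A-B$ and $A+B$. Likewise, in the third-order expressions the only factor depending on $\hat{\omega}_2$ is $\left(\frac{g_2'}{g_2^2\hat{\omega}_2}\right)'$, which changes sign, whereas $\frac{g_2'}{g_2}$, $\frac{g_1'}{g_1}$ and $\left(\frac{g_1'}{g_1^2\hat{\omega}_1}\right)'$ are all unaffected. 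Therefore the swallowtail combination $\left(\frac{g_1'}{g_1^2\hat{\omega}_1}\right)'\frac{g_2'}{g_2}-\left(\frac{g_2'}{g_2^2\hat{\omega}_2}\right)'\frac{g_1'}{g_1}$ and the cuspidal-cross-cap combination $\left(\frac{g_1'}{g_1^2\hat{\omega}_1}\right)'\frac{g_2'}{g_2}+\left(\frac{g_2'}{g_2^2\hat{\omega}_2}\right)'\frac{g_1'}{g_1}$ are interchanged as well.

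With this bookkeeping the conclusions are immediate. The cuspidal-edge condition $A-B\neq0$ and $A+B\neq0$ is symmetric under $A-B\leftrightarrow A+B$, so it holds for $f$ exactly when it holds for $f^*$, giving~(i). For~(ii), the swallowtail conditions for $f$ (that is, $A-B\neq0$, $A+B=0$, and the swallowtail combination $\neq0$) transform precisely into the cuspidal-cross-cap conditions for $f^*$ ($A+B\neq0$, $A-B=0$, and the cuspidal-cross-cap combination $\neq0$), so $p$ is a swallowtail of $f$ if and only if it is a cuspidal cross cap of $f^*$. Since $(f^*)^*=f$ (the substitution is an involution), applying the same equivalence with $f^*$ in the role of $f$ yields that $p$ is a cuspidal cross cap of $f$ if and only if it is a swallowtail of $f^*$, which completes~(ii).

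I expect no serious obstacle: once one recognizes that conjugation is nothing but the substitution $\hat{\omega}_2\mapsto-\hat{\omega}_2$, the whole proof reduces to sign-tracking within the criteria of Fact~\ref{Fact: T-lemma}. The one point demanding care is the transformation of the third-order terms, where one must verify that $\frac{g_2'}{g_2}$ and $\frac{g_1'}{g_1}$ are genuinely independent of $\hat{\omega}_2$ and that only $\left(\frac{g_2'}{g_2^2\hat{\omega}_2}\right)'$ flips sign, so that the swallowtail and cuspidal-cross-cap combinations truly swap rather than merely differing by an overall sign.
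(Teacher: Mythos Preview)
Your proposal is correct and follows exactly the route the paper indicates: the paper presents Fact~\ref{duality} only as a corollary of Fact~\ref{Fact: T-lemma} (without spelling out a proof), and your argument supplies precisely those details by tracking the substitution $\hat{\omega}_2\mapsto-\hat{\omega}_2$ through the criteria. The bookkeeping is accurate, including the observation that the swap $A-B\leftrightarrow A+B$ and the swap of the two third-order combinations are genuine interchanges rather than mere sign changes.
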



 \end{document}